\xapptocmd\normalsize{%
	\abovedisplayskip=12pt plus 3pt minus 9pt
	\abovedisplayshortskip=0pt plus 3pt
	\belowdisplayskip=12pt plus 3pt minus 9pt
	\belowdisplayshortskip=7pt plus 3pt minus 4pt
}{}{}
\theoremstyle{definition}
\newtheorem{definition}{Definition}[section]
\theoremstyle{plain}
\newtheorem{theorem}[definition]{Theorem}
\newtheorem{corollary}[definition]{Corollary}
\newtheorem{lemma}[definition]{Lemma}
\numberwithin{equation}{section}
\title[Differential Subordination implications............]{\textbf{ Differential Subordination implications for Certain Carath\'{e}odory functions}}
\author[M. Sharma]{Meghna Sharma}
\address{Department of Mathematics, University of Delhi, Delhi--110 007, India}
\email{meghnasharma203@gmail.com}
\author [S. Kumar]{Sushil Kumar}
\address {Bharati Vidyapeeth's college of Engineering, Delhi-110063, India}
\email{sushilkumar16n@gmail.com}
\author [N.K. Jain]{Naveen Kumar Jain}
\address {Department of Mathematics, Aryabhatta College, Delhi-110021,India}
\email{naveenjain05@gmail.com}
\date{}
\keywords{Differential Subordination; Carath\'{e}odory function; Starlike Function; sufficient conditions}
\subjclass[2010]{30C45}
\thanks{The first author is supported by Junior Research Fellowship from Council of Scientific and Industrial Research, New Delhi, Ref. No.:1753/(CSIR-UGC NET JUNE, 2018).}
\begin{document}
\maketitle

\begin{abstract}
In this article, we wish to establish some
first order differential subordination relations for certain Carath\'{e}odory functions with nice geometrical properties. Moreover, several implications are determined so that the normalized analytic function  belongs to various subclasses of starlike functions.
\end{abstract}

\section{Introduction}
Denote the collection of all  functions $f$ which are analytic on the open unit disc by $\mathscr{H}$.
Let $\mathcal{A} \subset \mathscr{H}$ be the subclass consisting of analytic functions given by $f(z)=z+\sum\limits_{n=2}^{\infty}a_{n}z^{n}$ and normalised by the conditions $f(0)=0$ and $f'(0)-1=0$.
Further, let $\mathcal{S}^{*}$ and $\mathcal{C}$ denote the subclasses of  univalent function consisting of starlike and convex functions, characterised by the quantities $zf'(z)/f(z)$ and $1+zf''(z)/f'(z)$ lying in the interior of the right half plane respectively.
Let $f$ and $g$ be members of $\mathscr{H}$.
We say $f$ is subordinate to $g$ (written as $f \prec g$) if there exists a function $w \in \mathscr{H}$ with $w(0)=0$ and $|w(z)|<1$ such that $f(z) = g(w(z))$.
Equivalently, if $g$ is univalent in $\mathbb{D}$, then the conditions $f(0)=g(0)$ and $f(\mathbb{D}) \subset g(\mathbb{D})$ together gives $f \prec g$. For more details, see~\cite{MR0783572}.
The unified class of starlike functions
$\mathcal{S}^{*}_{\varphi}:=\left\{f \in \mathcal{A}:{zf'(z)}/{f(z)} \prec \varphi(z); \,\,\mbox{for all} \,\, z\in \mathbb{D}\right\}$
where $\varphi$ is analytic, univalent, $\varphi(\mathbb{D})$ is starlike with respect to $\varphi(0)=1$ and $\operatorname{Re}(\varphi)>0$ was introduced and studied by Ma and Minda \cite{MR1343506}. Various subclasses of starlike functions have been studied by considering different choices of $\varphi$ in recent years. For $\varphi(z):=(1+Az)/(1+Bz),(-1 \leq B < A \leq 1)$, the class  $\mathcal{S}^{*}_{\varphi}$ reduces to the class $\mathcal{S}^{*}[A,B]$, introduced by Janowski \cite{MR0267103}.
A function $f \in \mathscr{H}$ is said to be a Carath\'{e}odory function if
$f(0) = 1$ and $\operatorname{Re}(f(z)) > 0$. The class of such functions is denoted by $\mathcal{P}$.
On taking some Carath\'{e}odory functions $\varphi(z):=e^z,\,\phi_q(z),\,\phi_{0}(z), \phi_{c}(z),\,\phi_{\lim}(z),\,\mathcal{Q}(z),\,\phi_{SG},$ $\phi_{s}(z)$, the class $\mathcal{S}^{*}_{\varphi}$ reduce to subclasses
$\mathcal{S}^*_{e}$~\cite{MR3394060}, $\mathcal{S}^{*}_{q}$~\cite{MR3469339}, $\mathcal{S}^{*}_{R}$~\cite{MR3496681}, $\mathcal{S}^{*}_{c}$~\cite{MR3369109}, $\mathcal{S}^{*}_{LC}$~\cite{limc}, $\mathcal{S}^{*}_{B}$~\cite{bell s kumar}, $\mathcal{S}^{*}_{SG}$~\cite{MR4044913}, $\mathcal{S}^{*}_{s}$~\cite{MR3913990} respectively,
where
\begin{align*}
&\phi_q(z)=z+\sqrt{1+z^{2}},
\phi_{0}(z):=1+\frac{z}{k}.\frac{k+z}{k-z};\quad k=1+\sqrt{2},\\
&\phi_{c}(z):=1+\frac{4z}{3}+\frac{2z^2}{3},
\phi_{\lim}(z):=1+\sqrt{2}z+\frac{z^{2}}{2},
\phi_{s}(z):=1+\sin z.
\end{align*}
Recently, Kumar \emph{et al.}\cite{bell s kumar} introduced and studied differential subordination relations and radius estimates for the class $\mathcal{S}^{*}_{B}:= \mathcal{S}^{*}(\mathcal{Q}(z)$, where
\begin{equation}\label{eqn5}
\mathcal{Q}(z):= e^{e^{z}-1}
 \end{equation}

In 2020, Goel and Kumar \cite{MR4044913} studied the subclass  $\mathcal{S}^{*}_{SG}:= \mathcal{S}^{*}(\phi_{SG})$,
where
\begin{equation}\label{eqn6}
\phi_{SG}(z)=2/(1+e^{-z})\,\, \text{for all z} \,\,\in \mathbb{D}.
\end{equation}
Several first order differential subordination results, radius estimates and coefficient estimates were investigated.
These subclasses of starlike functions are well associated with the right half plane of the complex plane and introduced and discussed by various authors.

In 1989, for $p \in \mathcal{P}$, Nunokawa \emph{et al.\@} \cite{MR0975653} proved that the differential subordination $1+ z p'(z) \prec 1+z$ implies $p(z) \prec 1+z$. Further, authors~\cite{Nuno99} established sufficient conditions for starlike functions discussed by Silverman~\cite{Silverman99} to be strongly convex and strongly starlike in $\mathbb{D}$. In 2006, Kanas~\cite{Kanas06} determined the conditions for the functions to map $\mathbb{D}$ onto hyperbolic and parabolic regions using the concept of differential subordination.
In 2007, Ali \emph{et al.\@} \cite{MR2336133} obtained conditions on $\beta \in \mathbb{R}$ for which
$1 + \beta z p'(z)/p^{j}(z) \prec (1+Dz)/(1+Ez)$, $j=0,1,2$ implies
$p(z) \prec (1+Az)/(1+Bz)$, where $A,B,D,E \in [-1,1]$. Later, Kumar and Ravichandran \cite{MR3800966} determined sharp upper bounds on $\beta$ such that
$1 + \beta z p'(z)/p^{j}(z),j=0,1,2$ is subordinate to some Carath\'{e}odory functions like $e^{z}, \phi_{0}(z)$ etc. implies
$p(z) \prec e^{z}$ and $(1+Az)/(1+Bz)$.
For more such results, we refer \cite{Ahuja18,Bohra19,Gandhi18,Ebadian20}.

In the present paper, we determine sharp estimate on $\beta$ so that $p(z) \prec \phi_{q}(z)$, $\mathcal{Q}(z)$, $\phi_{c}(z)$, $\phi_{0}(z)$, $\phi_{\lim}(z)$, $\phi_{s}(z)$, $\phi_{SG}(z)$ whenever $1 + \beta {zp'(z)}/{p^{j}(z)} \prec \mathcal{Q}(z)$ and $\phi_{SG}(z);$ $(j=0,1,2)$.
Further the best possible bound on $\beta$ is computed such that $p(z) \prec \mathcal{Q}(z)$ whenever $1 + \beta {zp'(z)}/{p^{j}(z)} \prec \phi_{c}(z);$ $(j=0,1,2)$.
At last, the upper bound on $\beta$ is estimated so that the subordination
$1 + \beta {zp'(z)}/{p^{j}(z)} \prec \phi_{0}(z)$ and $\phi_{c}(z)$ implies $p(z) \prec \phi_{SG}(z)$.
Moreover, sufficient conditions are obtained for an analytic function $f$ to be a member of a certain subclass of starlike function. \\


\section{Main Results}
Before we prove our main results, we recall following lemma which plays a vital role in our proofs.

\begin{lemma}\label{lemma}\cite[Theorem 3.4h, p.132]{MR1760285}
Let $q : \mathbb{D} \rightarrow \mathbb{C}$ be analytic, and $\psi$ and $v$ be analytic in a domain $U \supseteq q(\mathbb{D})$ with $\psi(w) \neq 0$ whenever $w \in q(\mathbb{D})$. Set
\[Q(z):= zq'(z)\psi(q(z)) \quad \text{and} \quad h(z):=v(q(z))+Q(z),z \in \mathbb{D}.\]
Suppose that
\begin{enumerate}[(i)]
\item either $h(z)$  is convex, or $Q(z)$ is starlike univalent in $\mathbb{D}$ and
\item $\operatorname{Re}\left(\frac{zh'(z)}{Q(z)}\right)>0, z \in \mathbb{D}$.
\end{enumerate}
If $p$ is analytic in $\mathbb{D}$, with $p(0)=q(0)$, $p(\mathbb{D}) \subset U$ and
\[v(p(z))+zp'(z)\psi(p(z)) \prec v(q(z))+zq'(z)\psi(q(z))\]
then $p \prec q$, and $q$ is the best dominant.
\end{lemma}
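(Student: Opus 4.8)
The plan is to establish this as a first-order differential subordination via the admissible-function method, arguing by contradiction with the boundary form of the Jack--Miller--Mocanu lemma. First I would record the differential operator by setting $\phi(r,s) := v(r) + s\,\psi(r)$, so that the hypothesis reads $\phi(p(z),zp'(z)) \prec h(z)$ with $h(z) = v(q(z)) + zq'(z)\psi(q(z))$, while $q$ itself solves the associated differential equation $\phi(q(z),zq'(z)) = h(z)$. Since $\psi(w) \neq 0$ on $q(\mathbb{D})$ and $p(\mathbb{D}) \subset U$, the operator is well defined along both $p$ and $q$. The target conclusion $p \prec q$ amounts to showing that $q$ dominates every admissible $p$; best dominance is then automatic, because $p = q$ realises the subordination (with equality), so any dominant $\widetilde q$ must contain $q(\mathbb{D})$, forcing $q \prec \widetilde q$.

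Next I would assume, toward a contradiction, that $p \not\prec q$. As $p(0) = q(0)$ and $q$ is univalent (which hypotheses (i)--(ii) secure), the failure of the subordination must manifest as a boundary contact: there exist points $z_0 \in \mathbb{D}$, $\zeta_0 \in \partial\mathbb{D}$ and a real number $m \geq 1$ such that $p(z_0) = q(\zeta_0)$ and $z_0 p'(z_0) = m\,\zeta_0 q'(\zeta_0)$. Feeding these into $\phi$ gives
\[
\phi\bigl(p(z_0),\,z_0 p'(z_0)\bigr) = v(q(\zeta_0)) + m\,\zeta_0 q'(\zeta_0)\,\psi(q(\zeta_0)) = h(\zeta_0) + (m-1)\,Q(\zeta_0),
\]
so the subordinate's value at $z_0$ is the boundary point $h(\zeta_0)$ pushed along the ray in the direction $Q(\zeta_0)$ by the non-negative amount $m-1$.

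The heart of the argument, and the step I expect to be the main obstacle, is the geometric admissibility claim that this pushed point must lie outside $h(\mathbb{D})$, contradicting $\phi(p(z),zp'(z)) \in h(\mathbb{D})$ for all $z$. The tangent to $\partial h(\mathbb{D})$ at $h(\zeta_0)$ runs in the direction $i\zeta_0 h'(\zeta_0)$, so the outward normal direction is $\zeta_0 h'(\zeta_0)$; condition (ii), $\operatorname{Re}(zh'(z)/Q(z)) > 0$, says exactly that $Q(\zeta_0)$ has strictly positive component along this outward normal, i.e. $Q(\zeta_0)$ points into the exterior. Convexity of $h(\mathbb{D})$ in the first alternative of (i) (respectively, starlikeness of the univalent $Q$ in the second) then upgrades this infinitesimal fact to the global statement that the whole ray $h(\zeta_0) + t\,Q(\zeta_0)$, $t \geq 0$, avoids $h(\mathbb{D})$; in particular the value at $z_0$ is never of the form $h(z)$ for $z \in \mathbb{D}$. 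Making this ``ray stays outside'' step rigorous is the delicate part, since it must genuinely exploit the convexity (or starlikeness) hypothesis in (i) rather than merely the sign of a single directional derivative. Once it is in place, the contradiction forces $p \prec q$, and the best-dominance assertion follows as noted in the first paragraph, completing the proof.
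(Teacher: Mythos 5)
First, a point of reference: the paper itself gives no proof of this statement --- it is quoted, with attribution, as Theorem 3.4h of Miller and Mocanu's monograph \cite{MR1760285} --- so your attempt can only be judged against the proof in that source. Your skeleton does match that proof: contradiction via the Miller--Mocanu boundary-contact lemma (there exist $z_0\in\mathbb{D}$, $\zeta_0\in\partial\mathbb{D}$, $m\ge 1$ with $p(z_0)=q(\zeta_0)$, $z_0p'(z_0)=m\zeta_0 q'(\zeta_0)$), the computation $v(p(z_0))+z_0p'(z_0)\psi(p(z_0)) = h(\zeta_0)+(m-1)Q(\zeta_0)$, and the reduction to showing this point lies outside $h(\mathbb{D})$ (this is exactly the admissibility condition for the class $\Psi[h,q]$). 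However, the step you yourself flag as ``the delicate part'' is the entire content of the theorem, and you leave it unproved; that is a genuine gap, not a detail. The sign condition $\operatorname{Re}\left(\zeta_0 h'(\zeta_0)/Q(\zeta_0)\right)>0$ at the single contact point, together with starlikeness of $Q$, does not by any soft ``upgrade the infinitesimal fact'' argument force the ray $h(\zeta_0)+tQ(\zeta_0)$, $t\ge 0$, to avoid $h(\mathbb{D})$. In the convex case your normal-vector picture can indeed be made rigorous in one line by the supporting half-plane: $h(\mathbb{D})\subset\{w:\operatorname{Re}[(w-h(\zeta_0))\overline{\zeta_0 h'(\zeta_0)}]<0\}$ while $\operatorname{Re}[Q(\zeta_0)\overline{\zeta_0 h'(\zeta_0)}]>0$. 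But in the starlike-$Q$ case --- the one actually used everywhere in this paper, where $v\equiv 1$ and $h=1+Q$ --- the argument is not local at all: Miller and Mocanu prove that $L(z,t):=h(z)+tQ(z)$ is a subordination (Loewner) chain via Pommerenke's criterion, since $\operatorname{Re}\left(z\frac{\partial L}{\partial z}\big/\frac{\partial L}{\partial t}\right)=\operatorname{Re}\left(\frac{zh'(z)}{Q(z)}\right)+t\operatorname{Re}\left(\frac{zQ'(z)}{Q(z)}\right)>0$ --- this is precisely where hypotheses (i) and (ii) enter jointly --- and then conclude from univalence of $L(\cdot,t)$ and $h(\mathbb{D})=L(\mathbb{D},0)\subset L(\mathbb{D},t)$ that $L(\zeta_0,m-1)\notin h(\mathbb{D})$. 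Without this (or an equivalent) mechanism, your proof does not close.

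Two further inaccuracies. You assert that hypotheses (i)--(ii) ``secure'' the univalence of $q$; they do not. For instance, with $\psi(w)=\beta/w$ (used in Theorems \ref{th1} and \ref{th2} of the paper), starlikeness of $Q=\beta zq'/q$ only says $\log q$ is convex, which does not make $q$ injective. Univalence of $q$ is an explicit hypothesis of Theorem 3.4h that the paper's restatement silently drops, and the boundary-contact lemma you invoke requires it (indeed it requires $q$ to be injective and well behaved on $\overline{\mathbb{D}}$ minus an exceptional set). Related to this, the original proof first establishes the result for the dilations $q_r(z)=q(rz)$, $p_r(z)=p(rz)$, $0<r<1$, and lets $r\to 1^{-}$, so as to legitimize evaluating $q$, $q'$, $h$, $Q$ at boundary points $\zeta_0$; your argument uses these boundary values without justification. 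Your best-dominant paragraph, by contrast, is correct: $q$ itself satisfies the differential subordination, so every dominant $\widetilde{q}$ must satisfy $q\prec\widetilde{q}$.
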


\noindent
Throughout this paper, the following notations will be used:
\[\Psi_{\beta}(z,p(z)) = 1+\beta zp'(z), \qquad \Lambda_{\beta}(z,p(z)) = 1+\beta \frac{zp'(z)}{p(z)},\quad \mbox{and}\quad \Theta_{\beta}(z,p(z)) = 1+\beta \frac{zp'(z)}{p^{2}(z)}.\]


\begin{theorem}\label{first thm for bell}
Let $\mathcal{Q}(z) \in \mathcal{P}$ be defined by \eqref{eqn5} and further
\begin{equation}\label{eq2}
\mathcal{L}=\int_{-1}^{0}\frac{e^{e^{t}-1}-1}{t}dt \quad{and} \quad \mathfrak{U}=\int_{0}^{1}\frac{e^{e^{t}-1}-1}{t}dt.
\end{equation}
Assume \textit{p} to be an analytic function in $\mathbb{D}$ with $p(0)=1$.
If $\Psi_{\beta}(z,p(z)) \prec \mathcal{Q}(z)$, then
	
\begin{enumerate}[(a)]
\item $p(z) \prec \phi_{q}(z)$ for
$ \beta \geq  \frac{1}{\sqrt{2}} \mathfrak{U} \approx 1.49762$.

\item  $p(z) \prec \mathcal{Q}(z)$ for $\beta \geq  \frac{1}{1-e^{(e^{-1}-1)}} \mathcal{L} \approx 1.446103$.

\item $p(z) \prec \phi_{c}(z)$ for $\beta \geq \frac{1}{2}\mathfrak{U} \approx 1.05898$.
		
\item  $p(z) \prec \phi_{0}(z)$ for $\beta \geq \left(3+2\sqrt{2}\right)\mathcal{L} \approx 3.94906$.

\item  $p(z) \prec \phi_{\lim}(z)$ for $\beta \geq  \frac{2}{2\sqrt{2}+1} \mathfrak{U} \approx 1.10643$.

\item  $p(z) \prec \phi_{s}(z)$ for $\beta \geq  \frac{1}{\sin1} \mathfrak{U} \approx 2.51696$.

\item  $p(z) \prec \phi_{SG}(z)$ for $\beta \geq  \frac{e+1}{e-1} \mathfrak{U} \approx 4.583145$.
\end{enumerate}
The bounds in each case are sharp.
\end{theorem}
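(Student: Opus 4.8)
The plan is to invoke Lemma~\ref{lemma} with the constant choices $v\equiv 1$ and $\psi\equiv\beta$, for which $v(p(z))+zp'(z)\psi(p(z))=\Psi_{\beta}(z,p(z))$. The natural comparison function is the \emph{best dominant} $q$ obtained by forcing equality in the hypothesis, i.e.\ the solution of $1+\beta zq'(z)=\mathcal{Q}(z)$ with $q(0)=1$, namely
\[ q(z)=1+\frac{1}{\beta}\int_{0}^{z}\frac{\mathcal{Q}(t)-1}{t}\,dt. \]
With this $q$ one computes $Q(z)=\beta zq'(z)=\mathcal{Q}(z)-1$ and $h(z)=v(q(z))+Q(z)=\mathcal{Q}(z)$, so the subordination hypothesis of the lemma becomes exactly $\Psi_{\beta}(z,p(z))\prec\mathcal{Q}(z)$, which is precisely what we are given.

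First I would check the two hypotheses of Lemma~\ref{lemma}. Condition~(ii) reads $\operatorname{Re}\bigl(zh'(z)/Q(z)\bigr)=\operatorname{Re}\bigl(z\mathcal{Q}'(z)/(\mathcal{Q}(z)-1)\bigr)>0$, i.e.\ it asserts precisely that $\mathcal{Q}(z)-1=e^{e^{z}-1}-1$ is starlike; and once this holds, $Q=\mathcal{Q}-1$ is starlike univalent, so condition~(i) is satisfied as well. Since $\mathcal{Q}'(0)=1$, the function $\mathcal{Q}-1$ is normalised and vanishes at $0$, and the genuinely function-specific task is to verify $\operatorname{Re}\bigl(z\mathcal{Q}'(z)/(\mathcal{Q}(z)-1)\bigr)>0$ on $\mathbb{D}$. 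Granting this, Lemma~\ref{lemma} yields $p\prec q$ for \emph{every} $\beta>0$, with $q$ the best dominant; all the $\beta$--dependence of the theorem must therefore enter through the second stage.

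Next I would extract the geometry of $q$. Because $\mathcal{Q}-1$ is starlike and $z(q-1)'(z)=\beta^{-1}(\mathcal{Q}(z)-1)$, its Alexander transform $q-1$ is convex, so $q$ is convex univalent with real coefficients and $q(\mathbb{D})$ is a convex domain symmetric about the real axis. On $(-1,1)$ the integrand $(\mathcal{Q}(t)-1)/t$ is positive, so $q$ is real and strictly increasing there with boundary values $q(1^{-})=1+\mathfrak{U}/\beta$ and $q(-1^{+})=1-\mathcal{L}/\beta$ by \eqref{eq2}. Each assertion $p\prec\varphi$ then follows by transitivity from $p\prec q$ once $q\prec\varphi$, i.e.\ $q(\mathbb{D})\subseteq\varphi(\mathbb{D})$. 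Using that $q(\mathbb{D})$ and $\varphi(\mathbb{D})$ are both convex and symmetric about the real axis, with extreme real parts attained at the real points $z=\pm1$, I would reduce this inclusion to the two endpoint inequalities $q(1)\le\varphi(1)$ and $q(-1)\ge\varphi(-1)$, that is $\mathfrak{U}/\beta\le\varphi(1)-1$ and $\mathcal{L}/\beta\le 1-\varphi(-1)$. Computing $\varphi(\pm1)$ for each target (for instance $\phi_{q}(1)=1+\sqrt2$, $\phi_{c}(1)=3$, $\phi_{0}(-1)=2\sqrt2-2$, $\phi_{SG}(1)=2/(1+e^{-1})$) and retaining the larger of the two resulting lower bounds on $\beta$ reproduces each stated constant; the right endpoint binds in (a),(c),(e),(f),(g) and the left endpoint in (b),(d).

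Finally, sharpness is read off from the extremal function $p=q$: at the threshold value of $\beta$ the binding endpoint $q(1)$ or $q(-1)$ lands exactly on $\partial\varphi(\mathbb{D})$, so no smaller $\beta$ can keep $q(\mathbb{D})$ inside $\varphi(\mathbb{D})$. The two steps I expect to require real care are the starlikeness of $e^{e^{z}-1}-1$ (the single analytic estimate that makes Lemma~\ref{lemma} applicable) and the rigorous reduction of the image inclusion $q(\mathbb{D})\subseteq\varphi(\mathbb{D})$ to the pair of real-axis endpoint inequalities. The latter is the main obstacle, since for general convex symmetric regions the coincidence of real diameters does not by itself force inclusion; one must exploit the specific structure of the Ma--Minda regions $\varphi(\mathbb{D})$, namely that their extreme real parts occur at $z=\pm1$ and that they nest around the convex region $q(\mathbb{D})$.
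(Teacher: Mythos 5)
Your proposal follows essentially the same route as the paper's own proof: the identical application of Lemma~\ref{lemma} with $v\equiv 1$, $\psi\equiv\beta$ and the same best dominant $q_{\beta}(z)=1+\beta^{-1}\int_{0}^{z}\bigl(\mathcal{Q}(t)-1\bigr)t^{-1}\,dt$, followed by the same reduction of each subordination $q_{\beta}\prec\mathcal{P}$ to the endpoint inequalities $\mathcal{P}(-1)\le q_{\beta}(-1)$ and $q_{\beta}(1)\le\mathcal{P}(1)$, producing exactly the stated constants and the same identification of which endpoint binds in each case. If anything you are more explicit than the paper, which simply asserts that condition \eqref{sufficient} is necessary and sufficient, whereas you justify the convexity and real-axis symmetry of $q_{\beta}(\mathbb{D})$ and candidly flag the one genuinely delicate step (that inclusion of the convex region $q_{\beta}(\mathbb{D})$ in the specific Ma--Minda regions really does reduce to the two real endpoints) that the paper leaves unproved as well.
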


\begin{proof}
The analytic function
$q_{\beta}:\overline{\mathbb{D}} \rightarrow \mathbb{C}$ defined by
\[q_{\beta}(z)=1+\frac{1}{\beta}\int_{0}^{z}\frac{e^{e^{t}-1}-1}{t}dt\]
is a solution of the first order linear differential equation
$1+\beta zq'_{\beta}(z) = e^{e^{z}-1}.$
For $w \in \mathbb{C}$, define the functions $v(w)=1$ and $\psi(w)=\beta$.
Now, the function $Q:\overline{\mathbb{D}} \rightarrow \mathbb{C}$ defined by
\[Q(z)=zq'_{\beta}(z)\psi(q_{\beta}(z)) = \beta zq'_{\beta}(z) = e^{e^{z}-1}-1\]
is starlike in $\mathbb{D}$.
Also, note that by analytic characterization of starlike functions, the function $h:\overline{\mathbb{D}} \rightarrow \mathbb{C}$ defined by
$h(z):= v(q_{\beta}(z))+Q(z)$
satisfies the inequality \[\operatorname{Re}\left(\frac{zh'(z)}{Q(z)}\right)=\operatorname{Re}\left(\frac{zQ'(z)}{Q(z)}\right)>0.\]
Therefore, the subordination
$1+\beta zp'(z) \prec 1+\beta zq'_{\beta}(z)$ implies $p \prec q_{\beta}$ by Lemma \ref{lemma}.
 For suitable  $\mathcal{P}(z)$, as $r \to 1$, $q_{\beta}(z) \prec \mathcal{P}(z)$ holds if the following inequalities holds:
\begin{equation}\label{sufficient}
\mathcal{P}(-1) < q_{\beta}(-1) < q_{\beta}(1) < \mathcal{P}(1).
\end{equation}
By the transitivity property, the required subordination $p(z) \prec \mathcal{P}(z)$ holds  if $q_{\beta}(z) \prec \mathcal{P}(z)$.
The condition \eqref{sufficient} turns out to be both necessary and sufficient for the subordination $p \prec \mathcal{P}$ to hold.
	
\begin{enumerate}[(a)]
\item Consider $\mathcal{P}(z)=\phi_{q}(z)$. Then the inequalities $q_{\beta}(-1) > -1+\sqrt{2}$ and $q_{\beta}(1) < 1+\sqrt{2}$ reduce to $\beta \geq \beta_{1}$ and $\beta \geq \beta_{2}$, where
\[\beta_{1}=\frac{1}{2-\sqrt{2}} \mathcal{L} \quad \text{and} \quad \beta_{2}=\frac{1}{\sqrt{2}} \mathfrak{U}\]
respectively. Thus, the subordination $q_{\beta} \prec \phi_{q}$ holds whenever
$\beta \geq \max\{\beta_{1},\beta_{2}\} = \beta_{2}$.
	
\item For $\mathcal{P}(z)=\mathcal{Q}(z)$, the inequalities
$q_{\beta}(-1) > \mathcal{Q}(-1)$ and $q_{\beta}(1) < \mathcal{Q}(1)$ give $\beta \geq \beta_{1}$ and $\beta \geq \beta_{2}$, where
\[\beta_{1}=\frac{1}{1-e^{e^{-1}-1}} \mathcal{L} \quad \text{and} \quad \beta_{2}=\frac{1}{e^{e-1}-1} \mathfrak{U}\]
respectively. Therefore, $q_{\beta} \prec \mathcal{Q}$ whenever
$\beta \geq \max\{\beta_{1},\beta_{2}\} = \beta_{1}$.

\item On taking $\mathcal{P}(z)=\phi_{c}(z)$, a simple calculation shows that the inequalities $q_{\beta}(-1) > \phi_{c}(-1)$ and $q_{\beta}(1) < \phi_{c}(1)$ give $\beta \geq \beta_{1}$ and $\beta \geq \beta_{2}$, where
$\beta_{1}=({3}/{2}) \mathcal{L}$ and $\beta_{2}=({1}/{2}) \mathfrak{U}$
respectively. Therefore, $q_{\beta} \prec \phi_{c}$ holds whenever
$\beta \geq \max\{\beta_{1},\beta_{2}\} = \beta_{2}$.
		
\item On substituting $\mathcal{P}(z)=\phi_{0}(z)$, the inequalities $q_{\beta}(-1) > \phi_{0}(-1)$ and $q_{\beta}(1) < \phi_{0}(1)$ give $\beta \geq \beta_{1}$ and $\beta \geq \beta_{2}$, where
$\beta_{1}=(1/(3-2\sqrt{2})) \mathcal{L} $ and $ \beta_{2}= \mathfrak{U}$
respectively. Therefore, the subordination $q_{\beta} \prec \phi_{0}$ holds if
$\beta \geq \max\{\beta_{1},\beta_{2}\}=\beta_{1}$.

\item Take $\mathcal{P}(z)=\phi_{\lim}(z)$.
Then the inequalities $q_{\beta}(-1) > \frac{3}{2}-\sqrt{2}$ and $q_{\beta}(1) < \frac{3}{2}+\sqrt{2}$ reduce to $\beta \geq \beta_{1}$ and $\beta \geq \beta_{2}$, where
$\beta_{1}=(2/(2\sqrt{2}-1) \mathcal{L}$ and $ \beta_{2}=2/(2\sqrt{2}+1) \mathfrak{U}$
respectively. Thus, the required subordination $q_{\beta} \prec \phi_{\lim}$ holds  if
$\beta \geq \max\{\beta_{1},\beta_{2}\} = \beta_{2}$.
		
\item Take $\mathcal{P}(z)=\phi_{s}(z)$. Then the inequalities $q_{\beta}(-1) > 1+\sin(-1)$ and $q_{\beta}(1) < 1+\sin(1)$ give $\beta \geq \beta_{1}$ and $\beta \geq \beta_{2}$, where
$\beta_{1}={\mathcal{L}}/{\sin1}$ and $\beta_{2}={\mathfrak{U}}/{\sin1}$
respectively. This shows that  the subordination $q_{\beta} \prec \phi_{s}$ holds if
$\beta \geq \max\{\beta_{1},\beta_{2}\} = \beta_{2}$.
		
\item Set $\mathcal{P}(z) = 2/(1+e^{-z})$. Then $q_{\beta}(-1) > 2/(e+1)$ and $ q_{\beta}(1) < 2e/(e+1)$ gives
\[\beta_{1} = \frac{e+1}{e-1} \mathcal{L} \quad \text{and} \quad \beta_{2} = \frac{e+1}{e-1} \mathfrak{U}.\]
Hence, the subordination holds true for $\beta \geq \beta_{2}$ since $\max\{\beta_{1},\beta_{2}\} = \beta_{2}$.
\end{enumerate}
\begin{figure}[h!]
	\centering
	\begin{subfigure}[!]{0.3\linewidth}
		\includegraphics[width=\linewidth]{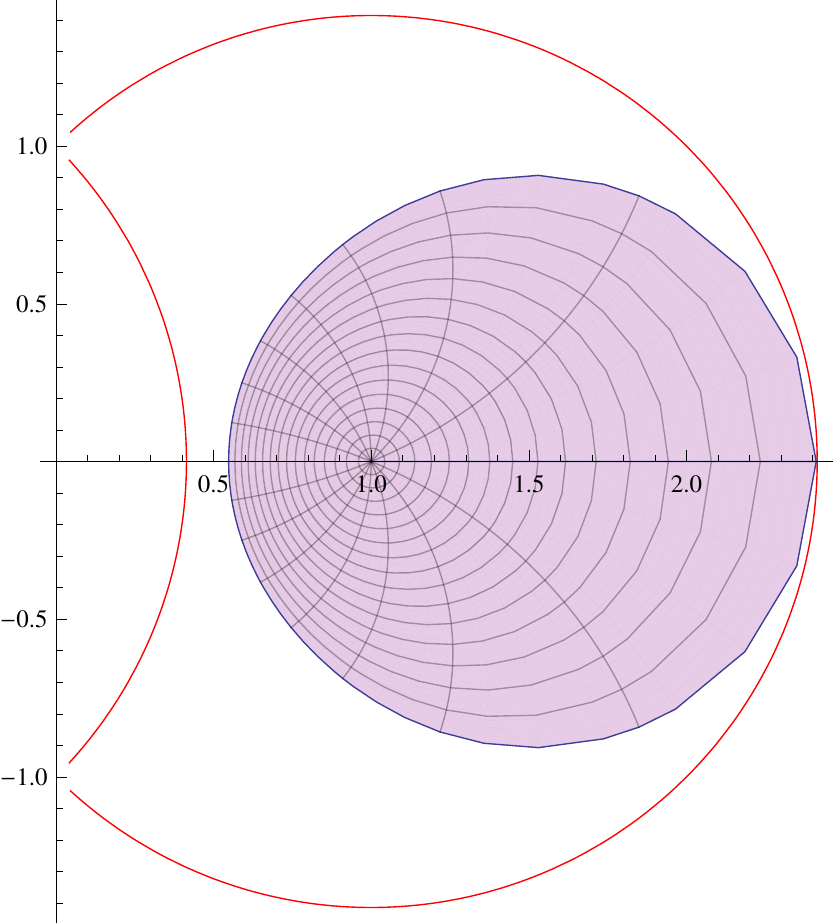}
	\end{subfigure}
\hspace{4em}
	\begin{subfigure}[!]{0.3\linewidth}
		\includegraphics[width=\linewidth]{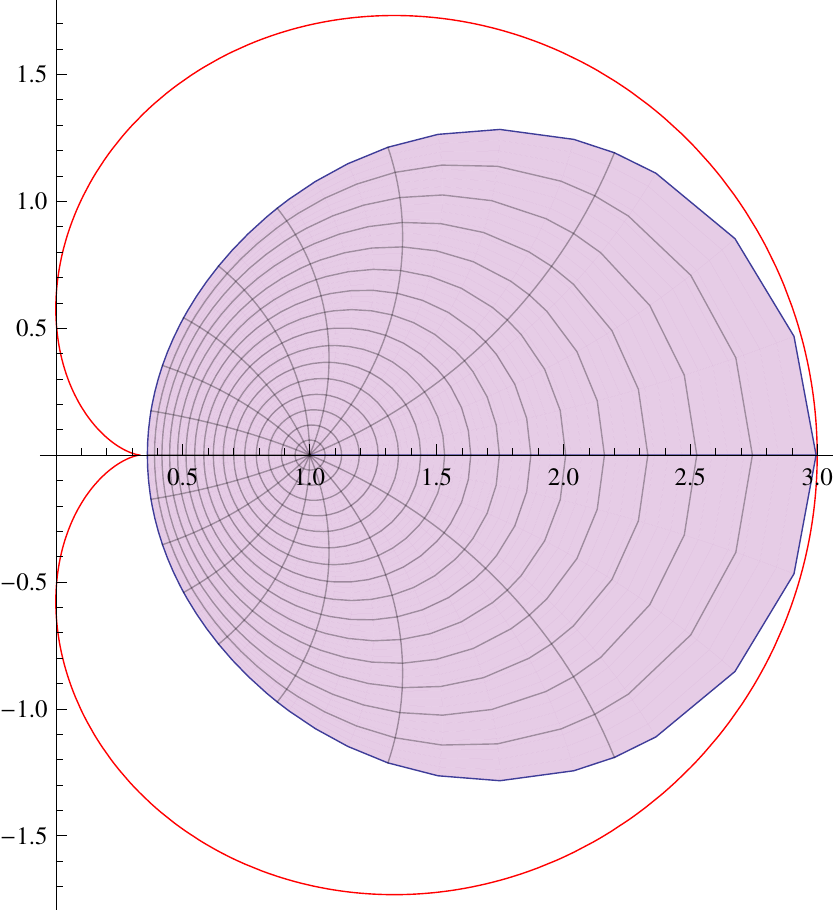}
	\end{subfigure}
	\caption{Sharpness for the case (a) and (b).}
	\label{fig:coffee}
\end{figure}
\qedhere
\end{proof}

As an application of Theorem \ref{first thm for bell}, we have the following sufficient conditions for starlikeness:
\begin{corollary}\label{Cor1}
Set $\mathfrak{M}(z):= 1 - zf'(z)/f(z) + zf''(z)/f'(z)$.
If the function $f \in \mathcal{A}$ satisfies
$1+ \beta \frac{zf'(z)}{f(z)}\mathfrak{M}(z) \prec \mathcal{Q}(z)$,
then
\begin{itemize}
\item [(a)]$f \in \mathcal{S}^{*}_{q}$ if
$ \beta \geq \left({1}/{\sqrt{2}}\right)\mathfrak{U}$,
\item [(b)] $f \in \mathcal{S}^{*}_{B}$ if
$ \beta \geq \left({1}/{(1-e^{(e^{-1}-1)})}\right)\mathcal{L}$,
\item [(c)] $f \in \mathcal{S}^{*}_{c}$ if
$\beta \geq \left({1}/{2}\right)\mathfrak{U}$,
\item [(d)] $f \in \mathcal{S}^{*}_{R}$ if
$ \beta \geq \left(3+2\sqrt{2}\right)\mathcal{L}$,
\item [(e)] $f \in \mathcal{S}^{*}_{LC}$ if
$ \beta \geq \left({2}/{(2\sqrt{2}+1)}\right)\mathfrak{U}$,
\item [(f)] $f \in \mathcal{S}^{*}_{s}$ if
$ \beta \geq \left({1}/{(\sin1)}\right)\mathfrak{U}$
\item [(g)] $f \in \mathcal{S}^{*}_{SG}$ if
$ \beta \geq \left({(e+1)}/{(e-1)}\right)\mathfrak{U}$,
\end{itemize}
where $\mathfrak{U}$ and $\mathcal{L}$  are given by \eqref{eq2}.
\end{corollary}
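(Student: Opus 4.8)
The plan is to deduce the corollary directly from Theorem~\ref{first thm for bell} by making the substitution $p(z) = zf'(z)/f(z)$. First I would observe that, since $f \in \mathcal{A}$ is of the form $f(z) = z + a_2 z^2 + \cdots$ and $f(z)/z$ does not vanish on $\mathbb{D}$ (which is implicit in the left-hand side of the hypothesis being well defined and analytic), the function $p(z) := zf'(z)/f(z)$ is analytic in $\mathbb{D}$ and satisfies the normalisation $p(0) = 1$ required by the theorem.

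The crux of the argument is a single algebraic identity. By logarithmic differentiation of $p(z) = zf'(z)/f(z)$ one finds
\[
\frac{zp'(z)}{p(z)} = 1 + \frac{zf''(z)}{f'(z)} - \frac{zf'(z)}{f(z)} = \mathfrak{M}(z),
\]
so that $zp'(z) = \frac{zf'(z)}{f(z)}\,\mathfrak{M}(z)$. Consequently the hypothesis of the corollary can be rewritten as
\[
1 + \beta\,\frac{zf'(z)}{f(z)}\,\mathfrak{M}(z) = 1 + \beta z p'(z) = \Psi_{\beta}(z,p(z)) \prec \mathcal{Q}(z),
\]
which is precisely the hypothesis of Theorem~\ref{first thm for bell}.

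With this reduction in place, the proof reduces to reading off the seven conclusions. I would apply Theorem~\ref{first thm for bell} term by term: its part (a) yields $p(z) \prec \phi_{q}(z)$ whenever $\beta \geq (1/\sqrt{2})\,\mathfrak{U}$, and since $p(z) = zf'(z)/f(z)$ this subordination is exactly the membership condition $f \in \mathcal{S}^{*}_{q}$. The remaining cases follow identically, matching part (b) with $\mathcal{S}^{*}_{B}$, (c) with $\mathcal{S}^{*}_{c}$, (d) with $\mathcal{S}^{*}_{R}$, (e) with $\mathcal{S}^{*}_{LC}$, (f) with $\mathcal{S}^{*}_{s}$, and (g) with $\mathcal{S}^{*}_{SG}$, each under the bound on $\beta$ inherited from the corresponding part of the theorem.

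I do not anticipate a genuine obstacle here, since the statement is a direct corollary; the only points requiring care are verifying the identity $zp'(z) = (zf'(z)/f(z))\,\mathfrak{M}(z)$ and the implicit regularity of $p$ (namely that $f(z)/z$ is non-vanishing, so that $p$ is analytic with $p(0)=1$), together with correctly pairing each Ma--Minda class $\mathcal{S}^{*}_{\varphi}$ with the part of Theorem~\ref{first thm for bell} that produces the subordination $p \prec \varphi$.
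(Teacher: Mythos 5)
Your proof is correct and is exactly the argument the paper intends: the corollary is stated as a direct application of Theorem~\ref{first thm for bell}, obtained by setting $p(z)=zf'(z)/f(z)$, noting $zp'(z)=\bigl(zf'(z)/f(z)\bigr)\mathfrak{M}(z)$, and translating each subordination $p\prec\varphi$ into membership in the corresponding class $\mathcal{S}^{*}_{\varphi}$. Your pairing of the seven cases and the bounds on $\beta$ all match the theorem, so there is nothing to add.
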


\begin{theorem}\label{th1}
Let $\mathfrak{U}$ and $\mathcal{L}$  be given by \eqref{eq2} and $\mathcal{Q}(z)$ be given by \eqref{eqn5}.
Let $p$ be an analytic function in $\mathbb{D}$ with $p(0)=1$.
If $\Lambda_{\beta}(z,p(z)) \prec \mathcal{Q}(z)$, then
		
\begin{enumerate}[(a)]
\item $p(z) \prec \phi_{q}(z)$ for $ \beta \geq \frac{1}{\log(1+\sqrt{2})}\mathfrak{U} \approx 2.40301$.

\item  $p(z) \prec \mathcal{Q}(z)$ for $ \beta \geq \frac{1}{e-1}\mathfrak{U} \approx 1.23260$.

\item $p(z) \prec \phi_{c}(z)$ for $\beta \geq  \frac{1}{\log3} \mathfrak{U} \approx 1.92784$.
			
\item  $p(z) \prec \phi_{0}(z)$ for $ \beta \geq  \frac{1}{\log\left(\frac{1+\sqrt{2}}{2}\right)} \mathcal{L} \approx 3.59966$.

\item  $p(z) \prec \phi_{\lim}(z)$ for $ \beta \geq  \frac{1}{\log(\sqrt{2}+3/2)} \mathfrak{U} \approx 1.98013$.
			
\item  $p(z) \prec \phi_{s}(z)$ for $ \beta \geq  \frac{1}{\log(1+\sin1)} \mathfrak{U} \approx 3.4688$.	
			
\item  $p(z) \prec \phi_{SG}(z)$ for $ \beta \geq  \frac{1}{1+\log2-\log(1+e)} \mathfrak{U} \approx 5.57523$.
\end{enumerate}
The bounds on $\beta$ are best possible.
\end{theorem}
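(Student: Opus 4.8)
The plan is to replicate the architecture of the proof of Theorem~\ref{first thm for bell}, replacing the additive solution of the associated linear equation by the multiplicative one that the operator $\Lambda_\beta$ now dictates. First I would introduce the analytic function $q_\beta:\overline{\mathbb{D}}\to\mathbb{C}$ defined by
\[q_\beta(z)=\exp\left(\frac{1}{\beta}\int_0^z\frac{e^{e^t-1}-1}{t}\,dt\right),\]
which is the natural candidate for the best dominant: separating variables in $1+\beta\,zq_\beta'(z)/q_\beta(z)=e^{e^z-1}=\mathcal{Q}(z)$ and integrating with $q_\beta(0)=1$ produces exactly this $q_\beta$, and it is zero-free since it is an exponential.

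Next I would invoke Lemma~\ref{lemma} with the choices $v(w)=1$ and $\psi(w)=\beta/w$. Taking $U=\mathbb{C}\setminus\{0\}$, the function $\psi$ is analytic on $U$ with $\psi(w)\neq 0$ there; the inclusion $q_\beta(\mathbb{D})\subset U$ holds because $q_\beta$ never vanishes, and $p(\mathbb{D})\subset U$ holds because the hypothesis $\Lambda_\beta(z,p(z))\prec\mathcal{Q}(z)$ already presupposes that $zp'(z)/p(z)$ is analytic, i.e.\ that $p$ is nonvanishing. With these choices one computes $Q(z)=zq_\beta'(z)\psi(q_\beta(z))=\beta\,zq_\beta'(z)/q_\beta(z)=e^{e^z-1}-1$ and $h(z)=v(q_\beta(z))+Q(z)=\mathcal{Q}(z)$, which are precisely the $Q$ and $h$ occurring in Theorem~\ref{first thm for bell}. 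Hence the starlikeness of $Q$ and the inequality $\operatorname{Re}(zh'(z)/Q(z))=\operatorname{Re}(zQ'(z)/Q(z))>0$ carry over verbatim, and Lemma~\ref{lemma} gives that $\Lambda_\beta(z,p(z))\prec\mathcal{Q}(z)$ forces $p\prec q_\beta$.

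The principal work is then to find, for each target $\mathcal{P}\in\{\phi_q,\mathcal{Q},\phi_c,\phi_0,\phi_{\lim},\phi_s,\phi_{SG}\}$, the least $\beta$ for which $q_\beta\prec\mathcal{P}$; transitivity then yields $p\prec\mathcal{P}$. Since the integrand is real on $(-1,1)$, the function $q_\beta$ has real Taylor coefficients and is symmetric about the real axis, so exactly as in Theorem~\ref{first thm for bell} the subordination $q_\beta\prec\mathcal{P}$ is governed by the endpoint condition \eqref{sufficient}, namely $\mathcal{P}(-1)<q_\beta(-1)<q_\beta(1)<\mathcal{P}(1)$, which is both necessary and sufficient. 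Here the relevant endpoint values are $q_\beta(1)=e^{\mathfrak{U}/\beta}$ and $q_\beta(-1)=e^{-\mathcal{L}/\beta}$ with $\mathfrak{U},\mathcal{L}$ as in \eqref{eq2}. For each $\mathcal{P}$ I would substitute these and solve the two inequalities; for example $q_\beta(1)<\phi_q(1)=1+\sqrt{2}$ reduces to $\beta\geq \mathfrak{U}/\log(1+\sqrt{2})$, while $q_\beta(-1)>\phi_0(-1)=2(\sqrt{2}-1)$ reduces to $\beta\geq \mathcal{L}/\log\bigl((1+\sqrt{2})/2\bigr)$ after observing that $1/(2(\sqrt{2}-1))=(1+\sqrt{2})/2$. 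Each inequality gives a lower bound $\beta\geq\beta_1$ (from $-1$) or $\beta\geq\beta_2$ (from $+1$), and the stated bound is $\max\{\beta_1,\beta_2\}$.

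The subordination machinery transfers with no new difficulty, so the main obstacle I anticipate is the case-by-case verification that the displayed bound is the larger of the two endpoint bounds and that it is sharp. Because both $q_\beta(1)$ and $q_\beta(-1)$ are monotone in $\beta$, both endpoint inequalities in \eqref{sufficient} hold for $\beta$ large enough, and at the critical value one endpoint $q_\beta(\pm1)$ meets the boundary value $\mathcal{P}(\pm1)$ on the real axis, which is exactly what delivers sharpness. I would record which endpoint is active in each case: the $+1$ endpoint (hence $\mathfrak{U}$) governs (a), (b), (c), (e), (f), (g), whereas the $-1$ endpoint (hence $\mathcal{L}$) governs (d). The only genuinely delicate algebra is reducing the logarithms of the boundary values $\mathcal{P}(\pm1)$ to the closed forms appearing in (a)--(g); this is routine but must be handled with care for $\phi_0$ and $\phi_{SG}$, where the simplifications $1/(2(\sqrt{2}-1))=(1+\sqrt{2})/2$ and $\log\bigl(2e/(e+1)\bigr)=1+\log 2-\log(1+e)$ are what produce the stated constants.
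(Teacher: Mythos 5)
Your proposal is correct and follows essentially the same route as the paper: the same exponential solution $\breve{q}_{\beta}(z)=\exp\bigl(\frac{1}{\beta}\int_{0}^{z}\frac{e^{e^{t}-1}-1}{t}\,dt\bigr)$, the same application of Lemma~\ref{lemma} with $v(w)=1$ and $\psi(w)=\beta/w$ giving $Q(z)=e^{e^{z}-1}-1$, and the same reduction to the endpoint inequalities of Theorem~\ref{first thm for bell}, with your identification of which endpoint ($\mathfrak{U}$ versus $\mathcal{L}$) governs each case matching the stated bounds. In fact your write-up supplies details the paper compresses into ``proceeding as in Theorem~\ref{first thm for bell}'', including the explicit endpoint values $q_{\beta}(1)=e^{\mathfrak{U}/\beta}$, $q_{\beta}(-1)=e^{-\mathcal{L}/\beta}$ and the nonvanishing/domain considerations needed to apply the lemma.
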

	
\begin{proof}
Consider the first order differential equation given by
\begin{equation}\label{diff}
1+\beta \frac{z\breve{q}_{\beta}'(z)}{\breve{q}_{\beta}(z)} = e^{e^{z}-1}.
\end{equation}
It is easy to verify that the analytic function
$\breve{q}_{\beta} : \overline{\mathbb{D}} \rightarrow \mathbb{C}$
defined by
\[\breve{q}_{\beta}(z)=\exp\left(\frac{1}{\beta}\int_{0}^{z}\frac{e^{e^{t}-1}-1}{t}dt\right)\]
is a solution of differential equation  \eqref{diff}.
On taking $v(w)=1$ and $\psi(w)=\beta /w$, the functions $Q,h : \overline{\mathbb{D}} \rightarrow \mathbb{C}$
reduces to
\[Q(z)=z\breve{q}'_{\beta}(z)\psi(\breve{q}_{\beta}(z)) = \beta z\breve{q}'_{\beta}(z)/\breve{q}_{\beta}(z) = e^{e^{z}-1}-1\]
and
\[h(z) = v(\breve{q}_{\beta}(z))+Q(z) = 1+Q(z) = e^{e^{z}-1}.\]
It is seen that the function $Q$ is starlike and $\operatorname{Re}\left({zh'(z)}/{Q(z)}\right)>0,$ for $z\in \mathbb{D}$.
Hence,
\[1+\beta \frac{zp'(z)}{p(z)} \prec 1+\beta \frac{z\breve{q}'_{\beta}(z)}{\breve{q}_{\beta}(z)}\quad \text{implies}\quad p(z) \prec \breve{q}_{\beta}(z)\]
which follows from Lemma \ref{lemma}.
Proceeding as Theorem in \ref{first thm for bell}, proof is completed.
	\end{proof}
	
	
%
%
		
\noindent
\begin{theorem}\label{th2}
Let $\mathfrak{U}$ and $\mathcal{L}$ be given by \eqref{eq2} and $\mathcal{Q}(z)$ be given by \eqref{eqn5}.
Assume $p$ to be an analytic function in $\mathbb{D}$ with $p(0)=1$.
If $\Theta_{\beta}(z,p(z)) \prec \mathcal{Q}(z)$, then each of the following subordination holds:
\begin{enumerate}[(a)]
\item $p(z) \prec \phi_{q}(z)$ for $ \beta \geq \frac{1}{2-\sqrt{2}}\mathfrak{U} \approx 3.61556 $.

\item  $p(z) \prec \mathcal{Q}(z)$ for $ \beta \geq \frac{e^{e-1}}{e^{e-1}-1}\mathfrak{U} \approx 2.58089$.

\item $p(z) \prec \phi_{c}(z)$ for $\beta \geq \frac{3}{2}\mathfrak{U} \approx 3.17692 $.
				
\item  $p(z) \prec \phi_{0}(z)$ for $ \beta \geq 2\mathfrak{U} \approx 4.2359$.

\item  $p(z) \prec \phi_{\lim}(z)$ for $ \beta \geq \frac{5+4\sqrt{2}}{7}\mathfrak{U} \approx 3.22438$.
				
\item  $p(z) \prec \phi_{s}(z)$ for $ \beta \geq \frac{1+\sin1}{\sin1}\mathfrak{U} \approx 4.63491$.
				
\item  $p(z) \prec \phi_{SG}(z)$ for $ \beta \geq \frac{2e}{e-1}\mathfrak{U} \approx 6.7011$.	
\end{enumerate}
The estimates on $\beta$ cannot be improved further.
\end{theorem}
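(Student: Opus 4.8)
The plan is to run the same machinery used in Theorems~\ref{first thm for bell} and~\ref{th1}, now with the multiplier $\psi(w)=\beta/w^{2}$ forced by the shape of $\Theta_{\beta}$. First I would manufacture the candidate best dominant by solving the first order differential equation
\[
1+\beta\frac{z\widetilde{q}_{\beta}'(z)}{\widetilde{q}_{\beta}^{2}(z)}=e^{e^{z}-1},\qquad \widetilde{q}_{\beta}(0)=1.
\]
Since $\widetilde{q}_{\beta}'/\widetilde{q}_{\beta}^{2}=-\tfrac{d}{dz}\bigl(1/\widetilde{q}_{\beta}\bigr)$, rewriting the equation as $-\tfrac{d}{dz}\bigl(1/\widetilde{q}_{\beta}(z)\bigr)=\bigl(e^{e^{z}-1}-1\bigr)/(\beta z)$ and integrating from $0$ to $z$ gives the explicit solution
\[
\widetilde{q}_{\beta}(z)=\left(1-\frac{1}{\beta}\int_{0}^{z}\frac{e^{e^{t}-1}-1}{t}\,dt\right)^{-1}.
\]

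Next I would invoke Lemma~\ref{lemma} with $v(w)=1$ and $\psi(w)=\beta/w^{2}$. The crucial observation is that the auxiliary functions are unchanged from Theorem~\ref{first thm for bell}: here
\[
Q(z)=z\widetilde{q}_{\beta}'(z)\psi(\widetilde{q}_{\beta}(z))=\beta\frac{z\widetilde{q}_{\beta}'(z)}{\widetilde{q}_{\beta}^{2}(z)}=e^{e^{z}-1}-1,
\qquad h(z)=1+Q(z)=e^{e^{z}-1},
\]
so the starlikeness of $Q$ and the inequality $\operatorname{Re}\bigl(zh'(z)/Q(z)\bigr)>0$ carry over verbatim. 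Consequently the subordination $\Theta_{\beta}(z,p(z))\prec\mathcal{Q}(z)$ yields $p\prec\widetilde{q}_{\beta}$, and $\widetilde{q}_{\beta}$ is the best dominant.

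Finally I would reduce each assertion (a)--(g) to the two-sided boundary estimate \eqref{sufficient}, exactly as in the earlier proofs. Evaluating the solution at the real endpoints gives $\widetilde{q}_{\beta}(1)=\beta/(\beta-\mathfrak{U})$ and $\widetilde{q}_{\beta}(-1)=\beta/(\beta+\mathcal{L})$. For a prescribed $\mathcal{P}\in\{\phi_{q},\mathcal{Q},\phi_{c},\phi_{0},\phi_{\lim},\phi_{s},\phi_{SG}\}$ the inequalities $\widetilde{q}_{\beta}(-1)>\mathcal{P}(-1)$ and $\widetilde{q}_{\beta}(1)<\mathcal{P}(1)$ each become a linear condition $\beta\ge\beta_{1}$ and $\beta\ge\beta_{2}$; since the right endpoint is the binding one, $\max\{\beta_{1},\beta_{2}\}=\beta_{2}$ reproduces the stated bound in every case. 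For instance, with $\mathcal{P}=\phi_{0}$ the condition $\beta/(\beta-\mathfrak{U})<\phi_{0}(1)=2$ gives precisely $\beta\ge 2\mathfrak{U}$. Because \eqref{sufficient} is both necessary and sufficient, these bounds cannot be improved, which is the asserted sharpness.

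The one genuinely new technical point---absent in Theorems~\ref{first thm for bell} and~\ref{th1}, where the solutions $1+\beta^{-1}\!\int$ and $\exp(\beta^{-1}\!\int)$ have no poles---is that $\widetilde{q}_{\beta}$ is a reciprocal and must be shown analytic (pole-free) on $\mathbb{D}$, and nonvanishing so that $\psi(\widetilde{q}_{\beta})=\beta/\widetilde{q}_{\beta}^{2}$ is admissible in Lemma~\ref{lemma}. Writing $F(z)=\int_{0}^{z}\bigl(e^{e^{t}-1}-1\bigr)/t\,dt$, one has $zF'(z)=Q(z)=e^{e^{z}-1}-1$, which is starlike, so $F$ is convex univalent with real Taylor coefficients; hence its image is convex and symmetric about the real axis with rightmost point $F(1)=\mathfrak{U}$, giving $\operatorname{Re}F(z)<\mathfrak{U}$ on $\mathbb{D}$. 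Every bound in the theorem forces $\beta>\mathfrak{U}$, so $F(z)\ne\beta$ throughout $\mathbb{D}$, the denominator $1-F(z)/\beta$ never vanishes, and $\widetilde{q}_{\beta}$ is a well-defined nonvanishing analytic dominant. This is the step I expect to require the most care; once it is in place the rest is the routine endpoint bookkeeping above.
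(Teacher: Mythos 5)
Your proposal is correct and follows essentially the same route as the paper's proof: the same best dominant $\hat{q}_{\beta}(z)=\left(1-\frac{1}{\beta}\int_{0}^{z}\frac{e^{e^{t}-1}-1}{t}\,dt\right)^{-1}$, the same choice $v(w)=1$, $\psi(w)=\beta/w^{2}$ in Lemma~\ref{lemma} with $Q(z)=e^{e^{z}-1}-1$, and the same endpoint reduction $\mathcal{P}(-1)<\hat{q}_{\beta}(-1)<\hat{q}_{\beta}(1)<\mathcal{P}(1)$, with the right endpoint giving the binding constraint $\beta\geq\frac{\mathcal{P}(1)}{\mathcal{P}(1)-1}\mathfrak{U}$ in all seven cases exactly as you computed. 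Your closing verification that $\beta>\mathfrak{U}$ keeps the denominator zero-free, so that $\hat{q}_{\beta}$ is analytic and nonvanishing and $\psi(\hat{q}_{\beta})$ is admissible, is a genuine point of rigor that the paper leaves implicit.
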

		
\begin{proof}
The function
\[\hat{q}_{\beta}(z)=\left(1-\frac{1}{\beta}\int_{0}^{z}\frac{e^{e^{t}-1}-1}{t}dt\right)^{-1}\]
is the analytic solution of the differential equation
\[\beta \frac{z\hat{q}'_{\beta}(z)}{\hat{q}_{\beta}^{2}(z)} = e^{e^{z}-1}-1.\]
Consider the functions $v(w)=1$ and $\psi(w)=\beta/w^2$.
Moreover, the function
$Q(z) = z\hat{q}'_{\beta}(z) \psi(\hat{q}_{\beta}(z)) =  e^{e^{z}-1}-1$ is starlike in $\mathbb{D}$.
Simple computation shows that the function
$h(z):= 1+Q(z)$ satisfies the inequality
$\operatorname{Re}\left({zh'(z)}/{Q(z)}\right)>0, (z \in \mathbb{D})$.
Now, by Lemma \ref{lemma}, we see that the subordination
\[1+\beta \frac{zp'(z)}{p^{2}(z)} \prec 1+\beta \frac{z\hat{q}'_{\beta}(z)}{\hat{q}^{2}_{\beta}(z)}\]
implies $p(z) \prec \hat{q}_{\beta}(z)$.
Proceeding as in Theorem \ref{first thm for bell}, we conclude the proof.
\end{proof}
	

\begin{theorem}\label{theo2.6}
Let $\phi_{SG}$ be given by \eqref{eqn6} and further
\begin{equation}\label{eq3}
I_{-}=\int_{-1}^{0}\frac{e^{t}-1}{t(e^{t}+1)}dt \quad\text{and} \quad I_{+}=\int_{0}^{1}\frac{e^{t}-1}{t(e^{t}+1)}dt.
\end{equation}
Assume \textit{p} to be an analytic function in $\mathbb{D}$ with $p(0)=1$.
If the  subordination
\[\Psi_{\beta}(z,p(z)) \prec \phi_{SG}(z)\]
holds, then each of the following subordination inclusion hold:
\begin{enumerate}[(a)]
\item $p(z) \prec \phi_{q}(z)$ for
$\beta \geq \frac{1}{2-\sqrt{2}} I_{-} \approx 0.83117$.
			
\item $p(z) \prec \phi_{c}(z)$ for
$\beta \geq \frac{3}{2}I_{-} \approx 0.730335$.
			
\item  $p(z) \prec \phi_{0}(z)$ for
$\beta \geq (3+2\sqrt{2})I_{-} \approx 2.837797$.
			
\item  $p(z) \prec \mathcal{Q}(z)$ for
$\beta \geq \frac{1}{1-e^{e^{-1}-1}}I_{-} \approx 1.039170$.
			
\item  $p(z) \prec \phi_{\lim}(z)$ for
$\beta \geq \frac{2}{2\sqrt{2}-1}I_{-} \approx 0.53257$.
			
\item  $p(z) \prec \phi_{s}(z)$ for $ \beta \geq \frac{1}{\sin1}I_{-} \approx 0.578616$

\item  $p(z) \prec \phi_{SG}(z)$ for $ \beta \geq \frac{e+1}{e-1}I_{-} \approx 1.05361$.	
\end{enumerate}
The bounds in each of the above case are sharp.
\end{theorem}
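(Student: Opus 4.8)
The plan is to repeat the scheme of Theorem~\ref{first thm for bell} with the dominant $\mathcal{Q}(z)$ replaced by $\phi_{SG}(z)$. First I would record the algebraic identity $\phi_{SG}(z)-1 = 2/(1+e^{-z})-1 = (e^{z}-1)/(e^{z}+1)$, so that the integrand in \eqref{eq3} is exactly $(\phi_{SG}(t)-1)/t$. I then introduce the analytic function $q_{\beta}:\overline{\mathbb{D}}\to\mathbb{C}$ defined by
\[
q_{\beta}(z)=1+\frac{1}{\beta}\int_{0}^{z}\frac{e^{t}-1}{t(e^{t}+1)}\,dt,
\]
which solves the first order linear differential equation $1+\beta z q_{\beta}'(z)=\phi_{SG}(z)$. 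Taking $v(w)=1$ and $\psi(w)=\beta$ in Lemma~\ref{lemma}, I set $Q(z)=\beta z q_{\beta}'(z)=\phi_{SG}(z)-1$ and $h(z)=1+Q(z)=\phi_{SG}(z)$. A direct computation gives $zQ'(z)/Q(z)=z/\sinh z$, which has positive real part on $\mathbb{D}$; hence $Q$ is starlike and $\operatorname{Re}(zh'(z)/Q(z))=\operatorname{Re}(zQ'(z)/Q(z))>0$. Lemma~\ref{lemma} then shows that $\Psi_{\beta}(z,p(z))\prec\phi_{SG}(z)$, that is $1+\beta z p'(z)\prec 1+\beta z q_{\beta}'(z)$, implies $p\prec q_{\beta}$.

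The second step is to derive each target subordination $p\prec\mathcal{P}$ by transitivity once $q_{\beta}\prec\mathcal{P}$ is established, with $\mathcal{P}$ ranging over $\phi_{q},\phi_{c},\phi_{0},\mathcal{Q},\phi_{\lim},\phi_{s},\phi_{SG}$. Since the integrand has real coefficients, $q_{\beta}$ has real Taylor coefficients, so $q_{\beta}(\mathbb{D})$ is symmetric about the real axis and $q_{\beta}$ is real and strictly increasing on $(-1,1)$, with extreme real values $q_{\beta}(-1)=1-I_{-}/\beta$ and $q_{\beta}(1)=1+I_{+}/\beta$. As in \eqref{sufficient}, the containment $q_{\beta}(\mathbb{D})\subseteq\mathcal{P}(\mathbb{D})$ is then equivalent to the two endpoint inequalities $\mathcal{P}(-1)<q_{\beta}(-1)$ and $q_{\beta}(1)<\mathcal{P}(1)$. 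I would also note the symmetry $\phi_{SG}(z)+\phi_{SG}(-z)=2$, which makes $\phi_{SG}(z)-1$ odd and hence $(\phi_{SG}(t)-1)/t$ even, so that $I_{-}=I_{+}$ and the real segment $q_{\beta}((-1,1))$ is centred at $1$.

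In the third step, for each $\mathcal{P}$ I would solve the two endpoint inequalities for $\beta$, obtaining $\beta_{1}=I_{-}/(1-\mathcal{P}(-1))$ from the left endpoint and $\beta_{2}=I_{+}/(\mathcal{P}(1)-1)$ from the right, and require $\beta\geq\max\{\beta_{1},\beta_{2}\}$. Because $I_{-}=I_{+}$, the larger threshold is governed purely by which of the two gaps $1-\mathcal{P}(-1)$ and $\mathcal{P}(1)-1$ is smaller; for the listed functions the left gap is the smaller (equal in the case $\mathcal{P}=\phi_{s}$), so the stated bound is $\beta_{1}$ in every case. For instance, $\mathcal{P}=\phi_{q}$ gives $q_{\beta}(-1)>-1+\sqrt{2}$ and $q_{\beta}(1)<1+\sqrt{2}$, whence $\beta_{1}=I_{-}/(2-\sqrt{2})$ and $\beta_{2}=I_{+}/\sqrt{2}$, and $\max\{\beta_{1},\beta_{2}\}=\beta_{1}$, matching (a); the remaining cases are the analogous substitutions of $\mathcal{P}(\pm1)$. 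Sharpness follows because the endpoint condition \eqref{sufficient} is not only sufficient but also necessary, so $q_{\beta}$ itself is extremal at the threshold value of $\beta$.

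The step I expect to be the main obstacle is the justification that \eqref{sufficient} is genuinely equivalent to $q_{\beta}\prec\mathcal{P}$, rather than merely sufficient: this requires that each dominant $\mathcal{P}$ map $\mathbb{D}$ onto a region symmetric about the real axis whose trace on that axis is exactly $(\mathcal{P}(-1),\mathcal{P}(1))$ and whose geometry is compatible with that of $q_{\beta}(\mathbb{D})$, so that nesting of the real diameters forces nesting of the full images. Once this reduction is in place, the only remaining care is the routine verification, case by case, of the endpoint values $\mathcal{P}(\pm1)$ and of the comparison $\max\{\beta_{1},\beta_{2}\}=\beta_{1}$, together with the starlikeness of $Q$ via $\operatorname{Re}(z/\sinh z)>0$.
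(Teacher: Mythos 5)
Your proposal is correct and follows essentially the same route as the paper: the same integral solution $q_{\beta}(z)=1+\frac{1}{\beta}\int_{0}^{z}\frac{e^{t}-1}{t(e^{t}+1)}\,dt$, the same application of Lemma~\ref{lemma} with $v(w)=1$, $\psi(w)=\beta$, and the same reduction, via transitivity, to the endpoint inequalities $\mathcal{P}(-1)<q_{\beta}(-1)<q_{\beta}(1)<\mathcal{P}(1)$ solved case by case. Your additional observations --- that $zQ'(z)/Q(z)=z/\sinh z$ justifies the starlikeness of $Q$, and that the evenness of the integrand gives $I_{-}=I_{+}$, so the binding constraint is always the left endpoint --- are refinements the paper leaves implicit, not a different method.
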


\begin{proof}
Consider the functions $v$ and $\psi$ defined as in Theorem \ref{first thm for bell}.
Define the function $q_{\beta}:\overline{\mathbb{D}} \rightarrow \mathbb{C}$ by
\[q_{\beta}(z)=1+\frac{1}{\beta}\int_{0}^{z}\frac{e^{t}-1}{t(e^{t}+1)}dt\]
Note that the function $q_{\beta}(z)$ is analytic solution of the differential equation
$1+\beta z q'_{\beta}(z)={2}/({1+e^{-z}}).$
The function $Q(z)=zq_{\beta}'(z)\psi(q_{\beta}(z))={(e^{z}-1)}/{(e^{z}+1)}$
is starlike in $\mathbb{D}$ and $h(z)=1+Q(z)$ satisfies the inequality $\operatorname{Re}\left({zh'(z)}/{Q(z)}\right)>0,$  $z\in \mathbb{D}$.
Thus, applying Lemma \ref{lemma}, it follows that the subordination
$1+\beta zp'(z) \prec 1+\beta zq'_{\beta}(z)$ implies $p(z) \prec q_{\beta}(z)$.
Each of the subordination $p(z) \prec \mathcal{P}(z)$, for appropriate $\mathcal{P}$, from (a) to (g) holds if $q_{\beta}(z) \prec \mathcal{P}(z)$ holds. This subordination holds provided
\[\mathcal{P}(-1) < q_{\beta}(-1) < q_{\beta}(1) < \mathcal{P}(1).\]
These inequalities yield necessary and sufficient condition for the required subordination.
		
\begin{enumerate}[(a)]
\item Take $\mathcal{P}(z)=\phi_{q}(z)$. Then, the inequalities $q_{\beta}(-1) > -1+\sqrt{2}$ and $q_{\beta}(1) < 1+\sqrt{2}$ reduce to $\beta \geq \beta_{1}$ and $\beta \geq \beta_{2}$, where
\[\beta_{1}=\frac{1}{2-\sqrt{2}} I_{-} \quad \text{and} \quad \beta_{2}=\frac{1}{\sqrt{2}} I_{+}\]
respectively. Therefore,  $q_{\beta} \prec \phi_{q}$ whenever
$\beta \geq \max\{\beta_{1},\beta_{2}\} = \beta_{1}$.
			
\item Consider $\mathcal{P}(z)=\phi_{c}(z)$. A simple calculation shows that the inequalities $q_{\beta}(-1) > \phi_{c}(-1)$ and $q_{\beta}(1) < \phi_{c}(1)$ gives $\beta \geq \beta_{1}$ and $\beta \geq \beta_{2}$, where
\[\beta_{1}=\frac{3}{2} I_{-} \quad \text{and} \quad\beta_{2}=\frac{1}{2} I_{+}\]
respectively. Therefore, the subordination $q_{\beta} \prec \phi_{c}$ holds if
$\beta \geq \max\{\beta_{1},\beta_{2}\} = \beta_{1}$.
			
\item On taking $\mathcal{P}(z)=\phi_{0}(z)$, the inequalities $q_{\beta}(-1) > \phi_{0}(-1)$ and $q_{\beta}(1) < \phi_{0}(1)$ give $\beta \geq \beta_{1}$ and $\beta \geq \beta_{2}$, where
\[\beta_{1}=\frac{1}{3-2\sqrt{2}} I_{-} \quad \text{and} \quad \beta_{2}= I_{+}\]
respectively. Therefore,
$q_{\beta} \prec \phi_{0}$  if
$\beta \geq \max\{\beta_{1},\beta_{2}\} = \beta_{1}$.
			
\item Consider $\mathcal{P}(z)=\mathcal{Q}(z)$. From the inequalities $q_{\beta}(-1) > \mathcal{Q}(-1)$ and $q_{\beta}(1) < \mathcal{Q}(1)$,
we note that $\beta \geq \beta_{1}$ and $\beta \geq \beta_{2}$, where
\[\beta_{1}=\frac{1}{1-e^{e^{-1}-1}} I_{-} \quad \text{and} \quad \beta_{2}=\frac{1}{e^{e-1}-1} I_{+}\]
respectively.
Thus, $q_{\beta} \prec \mathcal{Q}$ if $\beta \geq \max\{\beta_{1},\beta_{2}\}$.
			
\item Take $\mathcal{P}(z)=\phi_{\lim}(z)$. Then, the inequalities $q_{\beta}(-1) > \frac{3}{2}-\sqrt{2}$ and $q_{\beta}(1) < \frac{3}{2}+\sqrt{2}$ reduce to $\beta \geq \beta_{1}$ and $\beta \geq \beta_{2}$, where
\[\beta_{1}=\frac{2}{2\sqrt{2}-1} I_{-} \quad \text{and} \quad \beta_{2}=\frac{2}{2\sqrt{2}+1} I_{+}\]
respectively. Thus,  $q_{\beta} \prec \phi_{\lim}$ whenever
$\beta \geq \max\{\beta_{1},\beta_{2}\} = \beta_{1}$.
			
\item Take $\mathcal{P}(z)=\phi_{s}(z)$. Then, the inequalities $q_{\beta}(-1) > 1+\sin(-1)$ and $q_{\beta}(1) < 1+\sin(1)$ give $\beta \geq \beta_{1}$ and $\beta \geq \beta_{2}$, where
\[\beta_{1}=\frac{1}{\sin1} I_{-} \quad \text{and} \quad\beta_{2}=\frac{1}{\sin1} I_{+}\]
respectively. Therefore, the subordination $q_{\beta} \prec \phi_{s}$ holds if
$\beta \geq \max\{\beta_{1},\beta_{2}\} = \beta_{1}$.

\item Let $\mathcal{P}(z)=\phi_{SG}(z)$. On simplifying the inequalities $q_{\beta}(-1)>2/(e+1)$ and $q_{\beta}(1)<2e/(e+1)$, we get $\beta_{1}$ and $\beta_{2}$, where
\[\beta_{1}=\frac{e+1}{e-1} I_{-} \quad \text{and} \quad\beta_{2}=\frac{e+1}{e-1} I_{+}\]
respectively and thus, $q_{\beta} \prec \phi_{SG}$ whenever $\beta \geq \max\{\beta_{1},\beta_{2}\} = \beta_{1}$.
\end{enumerate}
\begin{figure}[h!]
	\centering
	\begin{subfigure}[!]{0.3\linewidth}
		\includegraphics[width=\linewidth]{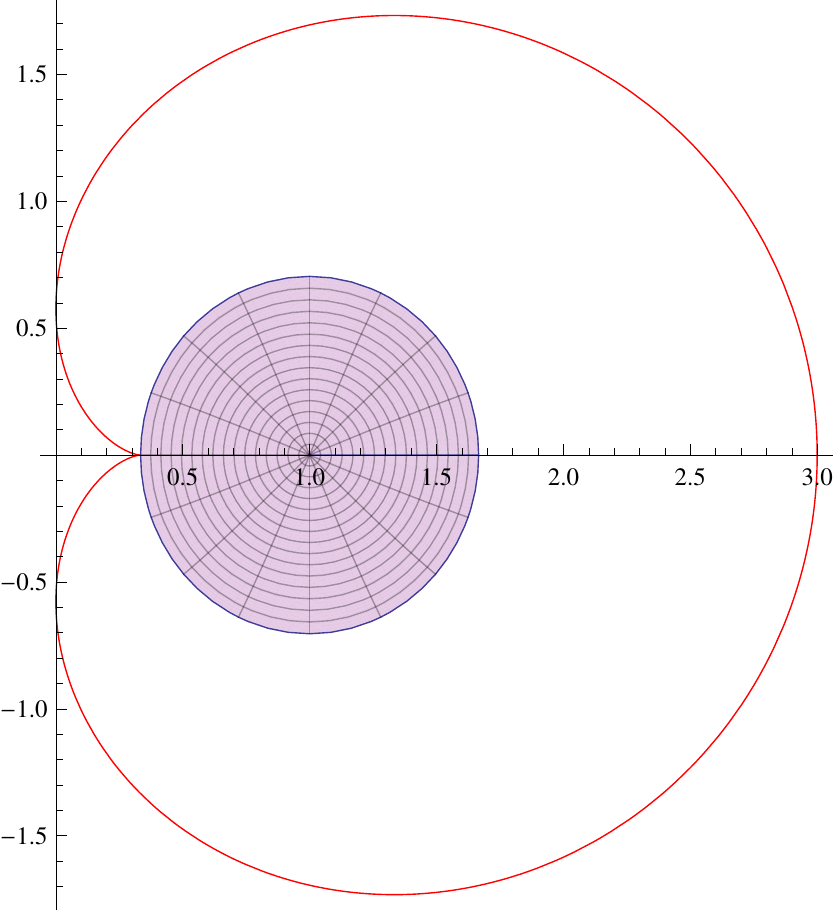}
	\end{subfigure}
	\hspace{4em}
	\begin{subfigure}[!]{0.3\linewidth}
		\includegraphics[width=\linewidth]{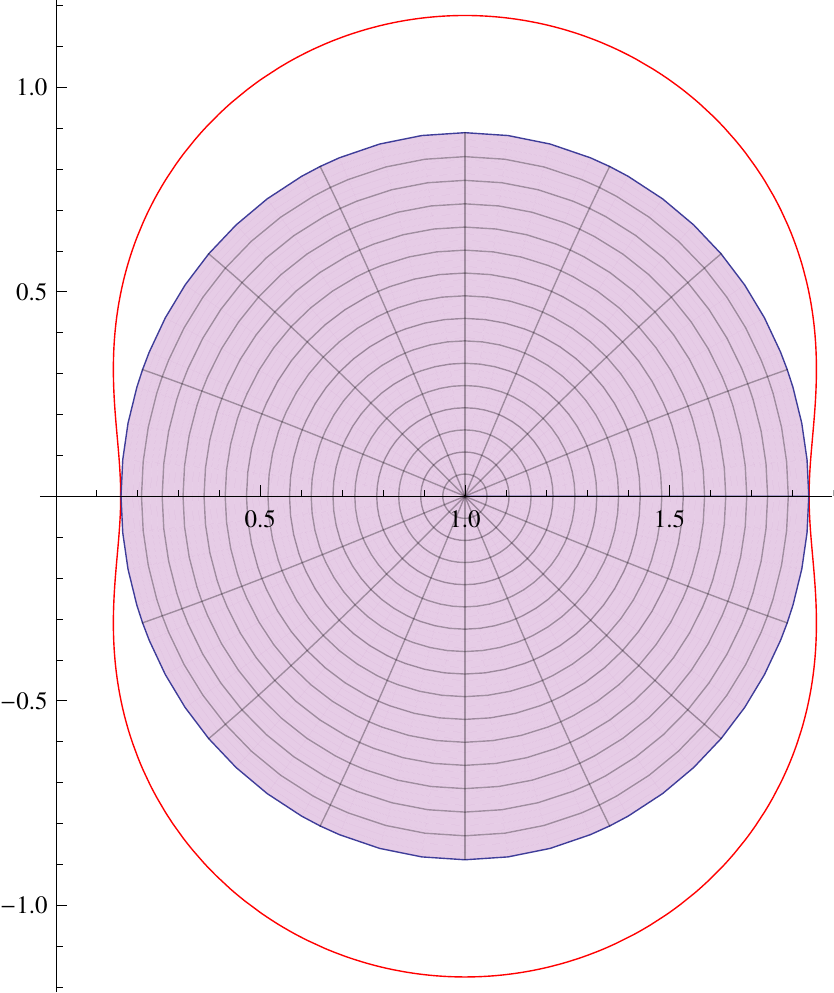}
	\end{subfigure}
	\caption{Sharpness for the case (b) and (f).}
	\label{fig 2}
\end{figure}
\qedhere
\end{proof}
As an application of Theorem \ref{theo2.6}, we have the following sufficient conditions for starlikeness:
\begin{corollary}
Let $f \in \mathcal{A}$ be analytic function which satisfies
\[1+ \beta \frac{zf'(z)}{f(z)} \mathfrak{M}(z) \prec \phi_{SG}(z).\]
Then,
\begin{enumerate}[(a)]
\item $f \in \mathcal{S}^{*}_{q}$ if
$ \beta \geq \left({1}/{(2-\sqrt{2})}\right)I_{-}$,
\item $f \in \mathcal{S}^{*}_{c}$ if
$\beta \geq \left({3}/{2}\right)I_{-}$,
\item $f \in \mathcal{S}^{*}_{R}$ if
$\beta \geq (3+2\sqrt{2})I_{-}$,
\item $f \in \mathcal{S}^{*}_{B}$ if
$\beta \geq \left({1}/{(1-e^{e^{-1}-1})}\right)I_{-}$,
\item $f \in \mathcal{S}^{*}_{LC}$ if
$\beta \geq \left({2}/{(2\sqrt{2}-1)}\right)I_{-}$,
\item $f \in \mathcal{S}^{*}_{s}$ if
$\beta \geq \left({1}/{(\sin1)}\right)I_{-}$,
\end{enumerate}
where $\mathfrak{M}(z)$ is defined in Corollary \ref{Cor1}.
\end{corollary}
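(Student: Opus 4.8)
The plan is to reduce this corollary to a direct application of Theorem \ref{theo2.6} by the standard logarithmic-derivative substitution. First I would set $p(z) := zf'(z)/f(z)$. Since $f \in \mathcal{A}$ has the expansion $f(z)=z+a_2 z^2+\cdots$, the quotient $f(z)/z = 1 + a_2 z + \cdots$ is analytic and non-vanishing at the origin, so $p$ is analytic near $0$ with $p(0)=f'(0)=1$; I would take for granted, as the well-posedness of the hypothesis implicitly requires, that $zf'(z)/f(z)$ is analytic throughout $\mathbb{D}$, so that $p$ is a legitimate choice for the function appearing in Theorem \ref{theo2.6}.

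The crucial computation is the logarithmic derivative. Differentiating $\log p(z) = \log z + \log f'(z) - \log f(z)$ gives
\[
\frac{zp'(z)}{p(z)} = 1 + \frac{zf''(z)}{f'(z)} - \frac{zf'(z)}{f(z)} = \mathfrak{M}(z),
\]
so that $zp'(z) = p(z)\,\mathfrak{M}(z) = \frac{zf'(z)}{f(z)}\,\mathfrak{M}(z)$. Consequently the hypothesis
\[
1 + \beta \frac{zf'(z)}{f(z)}\mathfrak{M}(z) \prec \phi_{SG}(z)
\]
is precisely $\Psi_{\beta}(z,p(z)) = 1 + \beta z p'(z) \prec \phi_{SG}(z)$, which is exactly the premise of Theorem \ref{theo2.6}. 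For each target function $\mathcal{P} \in \{\phi_q,\phi_c,\phi_0,\mathcal{Q},\phi_{\lim},\phi_s\}$, that theorem then yields $p(z) \prec \mathcal{P}(z)$ under the stated lower bound on $\beta$: parts (a)--(f) supply the bounds $\frac{1}{2-\sqrt{2}}I_{-}$, $\frac{3}{2}I_{-}$, $(3+2\sqrt{2})I_{-}$, $\frac{1}{1-e^{e^{-1}-1}}I_{-}$, $\frac{2}{2\sqrt{2}-1}I_{-}$ and $\frac{1}{\sin 1}I_{-}$ respectively. Recalling the definitions $\mathcal{S}^{*}_{q}=\mathcal{S}^{*}(\phi_q)$, $\mathcal{S}^{*}_{c}=\mathcal{S}^{*}(\phi_c)$, $\mathcal{S}^{*}_{R}=\mathcal{S}^{*}(\phi_0)$, $\mathcal{S}^{*}_{B}=\mathcal{S}^{*}(\mathcal{Q})$, $\mathcal{S}^{*}_{LC}=\mathcal{S}^{*}(\phi_{\lim})$ and $\mathcal{S}^{*}_{s}=\mathcal{S}^{*}(\phi_s)$, the subordination $p(z)=zf'(z)/f(z)\prec \mathcal{P}(z)$ is by definition the membership $f \in \mathcal{S}^{*}_{\mathcal{P}}$, and this reads off parts (a)--(f) in turn.

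There is essentially no deep obstacle here, since all the analytic content resides in Theorem \ref{theo2.6}; the corollary is a translation of its conclusions under the substitution $p = zf'/f$. The only steps requiring genuine care are the verification of the identity $zp'(z)/p(z)=\mathfrak{M}(z)$ and of the normalisation $p(0)=1$, together with the tacit regularity assumption that $zf'(z)/f(z)$ extends analytically to $\mathbb{D}$ so that the hypotheses of the underlying Lemma \ref{lemma} are met. Once these are confirmed, each of the seven cases is a verbatim consequence of the corresponding part of the theorem.
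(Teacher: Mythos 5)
Your proof is correct and is precisely the argument the paper intends: this corollary is stated as an immediate application of Theorem \ref{theo2.6}, obtained exactly by your substitution $p(z)=zf'(z)/f(z)$, under which $1+\beta zp'(z)$ becomes $1+\beta\frac{zf'(z)}{f(z)}\mathfrak{M}(z)$ and each conclusion $p(z)\prec\mathcal{P}(z)$ of parts (a)--(f) of that theorem translates into the corresponding class membership $f\in\mathcal{S}^{*}_{\mathcal{P}}$. The only trivial slip is your closing remark about ``seven cases'': the corollary covers six, since the $\phi_{SG}$ target from Theorem \ref{theo2.6}(g) is omitted here.
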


\begin{theorem}
Let $I_{+}$ and $I_{-}$ be given by \ref{eq3} and $\phi_{SG}$ be given by \eqref{eqn6}. Assume $p$ to be an analytic function in $\mathbb{D}$ with $p(0)=1$.
If $\Lambda_{\beta}(z,p(z)) \prec \phi_{SG}(z)$, then each of the following holds.
\begin{enumerate}[(a)]
\item $p(z) \prec \phi_{q}(z)$ for $ \beta \geq \frac{1}{\log(1+\sqrt{2})}I_{-} \approx 0.55242$.
		
\item $p(z) \prec \phi_{c}(z)$ for $\beta \geq \frac{1}{\log3}I_{-} \approx 0.443185$.
		
\item  $p(z) \prec \phi_{0}(z)$ for $ \beta \geq \frac{1}{\log\left(\frac{1+\sqrt{2}}{2}\right)}I_{-} \approx 2.58671 $.
		
\item  $p(z) \prec \mathcal{Q}(z)$ for $ \beta \geq \frac{1}{1-e^{-1}}I_{-} \approx 0.77024 $.
		
\item  $p(z) \prec \phi_{\lim}(z)$ for $ \beta \geq \frac{1}{\log(\sqrt{2}+3/2)}I_{+} \approx 0.455206 $.

\item  $p(z) \prec \phi_{s}(z)$ for $ \beta \geq \frac{1}{\log(1+\sin1)}I_{+} \approx 0.79744 $.

\item  $p(z) \prec \phi_{SG}(z)$ for $ \beta \geq \frac{1}{1+\log2-\log(1+e)}I_{+} \approx 1.28167$.	
\end{enumerate}
The estimates on $\beta$ are best possible.
\end{theorem}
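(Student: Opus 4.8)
The plan is to follow the template of Theorem~\ref{th1} verbatim, with the Bell-type Carath\'eodory function $\mathcal{Q}$ replaced by $\phi_{SG}$. The first step is to solve the associated first order linear differential equation
\[
1+\beta\frac{z\breve{q}'_{\beta}(z)}{\breve{q}_{\beta}(z)} = \phi_{SG}(z) = \frac{2}{1+e^{-z}}.
\]
Since $\phi_{SG}(t)-1 = (e^{t}-1)/(e^{t}+1)$, logarithmic integration suggests the candidate
\[
\breve{q}_{\beta}(z) = \exp\!\left(\frac{1}{\beta}\int_{0}^{z}\frac{e^{t}-1}{t(e^{t}+1)}\,dt\right),
\]
whose integrand has a removable singularity at $t=0$ (with value $1/2$), so $\breve{q}_{\beta}$ is analytic on $\overline{\mathbb{D}}$; differentiating $\log\breve{q}_{\beta}$ confirms that it solves the equation.

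Next I would invoke Lemma~\ref{lemma} exactly as in Theorem~\ref{th1}, taking $v(w)=1$ and $\psi(w)=\beta/w$. Then
\[
Q(z) = \frac{\beta z\breve{q}'_{\beta}(z)}{\breve{q}_{\beta}(z)} = \frac{e^{z}-1}{e^{z}+1}
\]
is starlike in $\mathbb{D}$ and $h(z)=1+Q(z)=\phi_{SG}(z)$ satisfies $\operatorname{Re}(zh'(z)/Q(z))>0$, so the hypotheses of the lemma hold and $\Lambda_{\beta}(z,p(z))\prec\phi_{SG}(z)$ yields $p\prec\breve{q}_{\beta}$.

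The concluding step is the transitivity reduction used throughout. Since $\breve{q}_{\beta}(\overline{\mathbb{D}})$ is symmetric about the real axis and $\breve{q}_{\beta}$ is increasing along the diameter, with $\breve{q}_{\beta}(-1)=\exp(-I_{-}/\beta)$ and $\breve{q}_{\beta}(1)=\exp(I_{+}/\beta)$, the subordination $\breve{q}_{\beta}\prec\mathcal{P}$ for each of the symmetric, univalent choices $\mathcal{P}$ in (a)--(g) is equivalent to the endpoint chain $\mathcal{P}(-1)<\breve{q}_{\beta}(-1)<\breve{q}_{\beta}(1)<\mathcal{P}(1)$. Solving $\exp(-I_{-}/\beta)>\mathcal{P}(-1)$ and $\exp(I_{+}/\beta)<\mathcal{P}(1)$ gives
\[
\beta\geq\beta_{1}=\frac{I_{-}}{-\log\mathcal{P}(-1)} \qquad\text{and}\qquad \beta\geq\beta_{2}=\frac{I_{+}}{\log\mathcal{P}(1)},
\]
and the stated threshold is $\max\{\beta_{1},\beta_{2}\}$ in each case; since the integrand $(e^{t}-1)/(t(e^{t}+1))$ is even one has $I_{-}=I_{+}$, so whether $I_{-}$ or $I_{+}$ appears in the final bound merely records which endpoint is binding, governed by the sign of $\mathcal{P}(1)\mathcal{P}(-1)-1$. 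Substituting the values $\mathcal{P}(\pm1)$ for $\phi_{q},\phi_{c},\phi_{0},\mathcal{Q},\phi_{\lim},\phi_{s},\phi_{SG}$ then produces the seven bounds.

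I expect the only genuine obstacle to be the rigorous justification that the endpoint chain is both necessary and sufficient for $\breve{q}_{\beta}\prec\mathcal{P}$; this is the same point asserted after \eqref{sufficient} in Theorem~\ref{first thm for bell}, and it rests on the real-axis symmetry of all functions involved together with the monotonicity of $\breve{q}_{\beta}$ on $(-1,1)$. Granting this, sharpness is immediate: at $\beta=\max\{\beta_{1},\beta_{2}\}$ equality holds at one endpoint, so any smaller $\beta$ forces $\breve{q}_{\beta}(\mathbb{D})$ to protrude beyond $\mathcal{P}(\mathbb{D})$, and the extremal function $\breve{q}_{\beta}$ itself witnesses that the bound cannot be lowered.
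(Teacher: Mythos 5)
Your proposal is correct and follows essentially the same route as the paper: the same explicit solution $\breve{q}_{\beta}(z)=\exp\bigl(\tfrac{1}{\beta}\int_{0}^{z}\tfrac{e^{t}-1}{t(e^{t}+1)}\,dt\bigr)$, the same invocation of Lemma \ref{lemma} with $v(w)=1$, $\psi(w)=\beta/w$, and the same endpoint-chain reduction $\mathcal{P}(-1)<\breve{q}_{\beta}(-1)<\breve{q}_{\beta}(1)<\mathcal{P}(1)$ leading to $\beta\geq\max\{\beta_{1},\beta_{2}\}$ in each case. Your observation that the integrand is even (so $I_{-}=I_{+}$ and the binding endpoint is decided by the sign of $\mathcal{P}(1)\mathcal{P}(-1)-1$) is a nice tidying of the case analysis that the paper performs case by case, and your caveat about justifying the equivalence of the endpoint chain with the subordination is exactly the point the paper also asserts without further proof.
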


\begin{proof}
Let the functions $v$ and $\psi$ be defined as in Theorem \ref{th1}.
Define the analytic function $\breve{q}_{\beta}:\overline{\mathbb{D}} \rightarrow \mathbb{C}$ by
\[\breve{q}_{\beta}(z)=\exp\left(\frac{1}{\beta}\int_{0}^{z}\frac{e^{t}-1}{t(e^{t}+1)}dt\right),\]
which satisfies the differential equation
\[\frac{d\breve{q}_{\beta}'(z)}{dz}=\frac{1}{\beta z}\left(\frac{1-e^{-z}}{1+e^{-z}}\right)\breve{q}_{\beta}(z).\]
Now, observe that the function
$Q(z)=z\breve{q}'_{\beta}(z)\psi(\breve{q}_{\beta}(z))=\frac{1-e^{-z}}{1+e^{-z}}$
is starlike in $\mathbb{D}$.
Also, it can be easily seen that the function $h$ defined by
$h(z):= v(\breve{q}_{\beta}(z))+Q(z) = 1+Q(z)$
satisfies the inequality
$\operatorname{Re}\left({zh'(z)}/{Q(z)}\right)>0,$  $z\in \mathbb{D}$.
Therefore, the Lemma \ref{lemma} states that the subordination
$1+\beta \frac{zp'(z)}{p(z)} \prec 1+\beta \frac{z\breve{q}_{\beta}'(z)}{\breve{q}_{\beta}(z)}$
implies $p(z) \prec \breve{q}_{\beta}(z)$.
%
%
As in the proof of Theorem \ref{theo2.6}, we conclude the result.
\end{proof}

\begin{theorem}
Let $I_{+}$ and $I_{-}$ be given by \eqref{eq3}.
Assume \textit{p} to be an analytic function in $\mathbb{D}$ with $p(0)=1$.
If  $\Theta_{\beta}(z,p(z)) \prec \phi_{SG}(z)$, then
\begin{enumerate}[(a)]
\item $p(z) \prec \phi_{q}(z)$ for $\beta \geq \frac{1}{2-\sqrt{2}}I_{+} \approx 0.83117 $.
		
\item $p(z) \prec \phi_{c}(z)$ for $\beta \geq \frac{3}{2}I_{+} \approx 0.73033 $.
		
\item $p(z) \prec \phi_{0}(z)$ for $\beta \geq (2+2\sqrt{2})I_{-} \approx 2.35090$.
		
\item  $p(z) \prec \mathcal{Q}(z)$ for $\beta \geq\frac{e^{e-1}}{e^{e-1}-1}I_{+} \approx 0.59331$.
		
\item  $p(z) \prec \phi_{\lim}(z)$ for $\beta \geq \frac{5+4\sqrt{2}}{7}I_{+} \approx 0.74124$.
		
\item  $p(z) \prec \phi_{s}(z)$ for $\beta \geq \frac{1+\sin1}{\sin1}I_{+} \approx 1.06550$.

\item  $p(z) \prec \phi_{SG}(z)$ for $\beta \geq \frac{2e}{e-1}I_{+} \approx 1.54049$.		
\end{enumerate}
All these estimates are sharp.
\end{theorem}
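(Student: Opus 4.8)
The plan is to follow the scheme of Theorem~\ref{th2}, which handles the operator $\Theta_{\beta}$, but with the dominant $\mathcal{Q}$ replaced by $\phi_{SG}$, exactly as Theorem~\ref{theo2.6} adapts Theorem~\ref{first thm for bell} from $\mathcal{Q}$ to $\phi_{SG}$. First I would identify the extremal function. Since $\phi_{SG}(z)-1=(e^{z}-1)/(e^{z}+1)$, the governing first order equation is $\beta z\hat{q}_{\beta}'(z)/\hat{q}_{\beta}^{2}(z)=(e^{z}-1)/(e^{z}+1)$, and writing $\hat{q}_{\beta}'/\hat{q}_{\beta}^{2}=-(1/\hat{q}_{\beta})'$ and integrating gives the normalized analytic solution
\[\hat{q}_{\beta}(z)=\left(1-\frac{1}{\beta}\int_{0}^{z}\frac{e^{t}-1}{t(e^{t}+1)}\,dt\right)^{-1}.\]
With the choices $v(w)=1$ and $\psi(w)=\beta/w^{2}$ from Theorem~\ref{th2}, one obtains $Q(z)=z\hat{q}_{\beta}'(z)\psi(\hat{q}_{\beta}(z))=(e^{z}-1)/(e^{z}+1)$ and $h(z)=1+Q(z)=\phi_{SG}(z)$. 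These are precisely the $Q$ and $h$ already verified in Theorem~\ref{theo2.6} to satisfy the hypotheses of Lemma~\ref{lemma} ($Q$ starlike and $\operatorname{Re}(zh'(z)/Q(z))>0$), so Lemma~\ref{lemma} yields at once that $\Theta_{\beta}(z,p(z))\prec\phi_{SG}(z)$ implies $p\prec\hat{q}_{\beta}$.

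Next I would reduce each target subordination $\hat{q}_{\beta}\prec\mathcal{P}$, with $\mathcal{P}$ one of $\phi_{q},\phi_{c},\phi_{0},\mathcal{Q},\phi_{\lim},\phi_{s},\phi_{SG}$, to the two-sided endpoint condition $\mathcal{P}(-1)<\hat{q}_{\beta}(-1)<\hat{q}_{\beta}(1)<\mathcal{P}(1)$. This is necessary and sufficient for the same reason invoked in Theorems~\ref{first thm for bell} and~\ref{theo2.6}: $\hat{q}_{\beta}$ is real and increasing on the real diameter and its image is symmetric about the real axis, as is each $\mathcal{P}(\mathbb{D})$, so containment is governed by the images of $z=\pm1$. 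Evaluating, and being careful that $\int_{0}^{-1}=-\int_{-1}^{0}$, gives the clean expressions $\hat{q}_{\beta}(-1)=\beta/(\beta+I_{-})$ and $\hat{q}_{\beta}(1)=\beta/(\beta-I_{+})$; the first is increasing and the second decreasing in $\beta$, so each endpoint inequality becomes a single lower bound $\beta\ge\beta_{1}$, respectively $\beta\ge\beta_{2}$, and the required bound is $\max\{\beta_{1},\beta_{2}\}$.

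The seven cases are then routine algebra on $\beta/(\beta\pm I_{\pm})$. The only step demanding attention is deciding which endpoint dominates: for most of the $\mathcal{P}$ the value at $z=1$ controls the answer, so the bound carries $I_{+}$, but in (c), where $\phi_{0}(1)=2$ and $\phi_{0}(-1)=2\sqrt{2}-2$, the $z=-1$ endpoint wins and yields $(2+2\sqrt{2})I_{-}$. Since the integrand $(e^{t}-1)/(t(e^{t}+1))$ is even we in fact have $I_{-}=I_{+}$, which is why every stated numerical bound is consistent with a single constant $\approx 0.487$. Finally, sharpness is immediate: the endpoint condition is not only sufficient but necessary, so at the critical $\beta$ one of the inequalities degenerates to equality in the limit $z\to\pm1$ and $\hat{q}_{\beta}$, the best dominant provided by Lemma~\ref{lemma}, attains the boundary; hence no smaller $\beta$ can work. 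I expect no genuine obstacle beyond this bookkeeping, the sign in $\hat{q}_{\beta}(-1)$ and the selection of the dominant endpoint being the only places where care is needed.
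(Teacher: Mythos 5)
Your proposal is correct and follows essentially the same route as the paper: the same extremal solution $\hat{q}_{\beta}(z)=\bigl(1-\tfrac{1}{\beta}\int_{0}^{z}\tfrac{e^{t}-1}{t(e^{t}+1)}\,dt\bigr)^{-1}$, the same choices $v(w)=1$ and $\psi(w)=\beta/w^{2}$ giving $Q(z)=(e^{z}-1)/(e^{z}+1)$ and $h=1+Q$, an application of Lemma \ref{lemma}, and the same reduction to the endpoint inequalities $\mathcal{P}(-1)<\hat{q}_{\beta}(-1)<\hat{q}_{\beta}(1)<\mathcal{P}(1)$ solved case by case. Your extra observation that the integrand is even, hence $I_{-}=I_{+}$, is a nice bonus the paper does not record, but it does not alter the argument.
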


\begin{proof}
The function $\hat{q}_{\beta}:\overline{\mathbb{D}} \rightarrow \mathbb{C}$ defined by
\[\hat{q}_{\beta}(z)=\left(1-\frac{1}{\beta}\int_{0}^{z}\frac{e^{t}-1}{t(e^{t}+1)}dt\right)^{-1}\]
is clearly analytic in $\mathbb{D}$.
It is noted that the function $\hat{q}_{\beta}(z)$ is a solution of the differential equation
\[1+\beta \frac{z\hat{q}'_{\beta}(z)}{\hat{q}^{2}_{\beta}(z)}=\frac{2}{1+e^{-z}}.\]
We take the functions $v$ and $\psi$ as in Theorem \ref{th2}.
Note that the function $Q$ defined by
$Q(z)=z\hat{q}'_{\beta}(z)\psi(\hat{q}_{\beta}(z))=(1-e^{-z})/(1+e^{-z})$
is starlike in $\mathbb{D}$ and the function $h$ defined as
$h(z):= v(\hat{q}_{\beta}(z))+Q(z) = 1+Q(z)$ follows the inequality $\operatorname{Re}\left({zh'(z)}/{Q(z)}\right)=\operatorname{Re}\left({zQ'(z)}/{Q(z)}\right)>0$.
Therefore, as in view of Lemma \ref{lemma}, the subordination
\[1+\beta \frac{zp'(z)}{p^{2}(z)} \prec 1+\beta \frac{z\hat{q}'_{\beta}(z)}{\hat{q}_{\beta}^{2}(z)}\]
implies $p(z) \prec \hat{q}_{\beta}(z)$.
Proceeding as in Theorem \ref{theo2.6}, proof is completed.
%
\end{proof}

\begin{theorem}\label{thm1}
Let \textit{p} be an analytic function in $\mathbb{D}$ with $p(0)=1$. Then each of the following subordination implies $p(z) \prec \mathcal{Q}(z):= e^{e^{z}-1}$:
\begin{enumerate}[(a)]
\item $\Psi_{\beta}(z,p(z)) \prec \phi_{c}(z)$ if
$\beta$ $\geq$ $\frac{1}{1-e^{(e^{-1}-1)}} \approx 2.13430$.
\item $\Lambda_{\beta}(z,p(z)) \prec \phi_{c}(z)$ if
$\beta$ $\geq$ $\frac{e}{e-1} \approx 1.581976$.
\item $\Theta_{\beta}(z,p(z)) \prec \phi_{c}(z)$ if
$\beta$ $\geq$ $\frac{5e^{e-1}}{3(e^{e-1}-1)} \approx 2.030970$.
\end{enumerate}
The bounds in each case are sharp.
\end{theorem}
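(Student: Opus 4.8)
The plan is to run exactly the machinery of Theorems \ref{first thm for bell}, \ref{th1} and \ref{th2}, but with the roles of $\phi_c$ and $\mathcal{Q}$ interchanged: here $\phi_c$ occupies the right-hand side of the hypothesis and $\mathcal{Q}$ is the target dominant. First I would record the common ingredient $\big(\phi_c(t)-1\big)/t=\tfrac43+\tfrac23 t$, whose antiderivative from $0$ to $z$ is $\tfrac13(4z+z^2)$. Feeding this into the three constructions used previously gives the analytic solutions
\[ q_\beta(z)=1+\frac{4z+z^2}{3\beta},\qquad \breve q_\beta(z)=\exp\!\left(\frac{4z+z^2}{3\beta}\right),\qquad \hat q_\beta(z)=\left(1-\frac{4z+z^2}{3\beta}\right)^{-1}, \]
which solve $\Psi_\beta(z,q_\beta)=\phi_c$, $\Lambda_\beta(z,\breve q_\beta)=\phi_c$ and $\Theta_\beta(z,\hat q_\beta)=\phi_c$ respectively. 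For $\hat q_\beta$ one needs $\beta>5/3$ so that $1-\tfrac{4z+z^2}{3\beta}$ does not vanish on $\overline{\mathbb{D}}$ (since $\RE(4z+z^2)<5$ there), a condition the claimed bound comfortably exceeds.

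Next I would apply Lemma \ref{lemma} with the same $v$ and $\psi$ as in the earlier theorems, namely $v\equiv1$ together with $\psi(w)=\beta$, $\beta/w$, $\beta/w^2$ in the three cases. In every case these choices collapse to the single pair $Q(z)=\phi_c(z)-1=\tfrac{2z}{3}(2+z)$ and $h(z)=1+Q(z)=\phi_c(z)$. Since $\phi_c$ is \emph{not} convex (its convexity quantity vanishes at $z=-1/2$), I cannot use the convexity alternative in hypothesis (i); instead I verify that $Q$ is starlike univalent by computing $zQ'(z)/Q(z)=2(1+z)/(2+z)$ and observing that this Möbius map carries $\mathbb{D}$ onto the disc centred at $2/3$ of radius $2/3$, which lies in the right half-plane, so $\RE(zQ'/Q)>0$. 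As $h=1+Q$, this simultaneously gives $\RE(zh'/Q)=\RE(zQ'/Q)>0$, and the Lemma yields $p\prec q_\beta$, $p\prec\breve q_\beta$, $p\prec\hat q_\beta$ from the three hypotheses.

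It then remains to show that each solution is subordinate to $\mathcal{Q}$; transitivity then delivers $p\prec\mathcal{Q}$. As in the previous theorems, the solutions and $\mathcal{Q}$ all have real Taylor coefficients, so their images are symmetric about the real axis, and the subordination $q_\beta\prec\mathcal{Q}$ is governed by the endpoint chain $\mathcal{Q}(-1)<q_\beta(-1)<q_\beta(1)<\mathcal{Q}(1)$, with the obvious analogues for $\breve q_\beta,\hat q_\beta$. Evaluating at $\pm1$ gives $q_\beta(1)=1+\tfrac{5}{3\beta}$, $q_\beta(-1)=1-\tfrac1\beta$; $\breve q_\beta(1)=e^{5/(3\beta)}$, $\breve q_\beta(-1)=e^{-1/\beta}$; and $\hat q_\beta(1)=(1-\tfrac{5}{3\beta})^{-1}$, $\hat q_\beta(-1)=(1+\tfrac1\beta)^{-1}$. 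Comparing with $\mathcal{Q}(-1)=e^{e^{-1}-1}$ and $\mathcal{Q}(1)=e^{e-1}$, in cases (a) and (b) the left-hand inequality is the binding one and produces $\beta\ge 1/(1-e^{e^{-1}-1})$ and $\beta\ge e/(e-1)$, while in case (c) the right-hand inequality binds and produces $\beta\ge 5e^{e-1}/\big(3(e^{e-1}-1)\big)$, each matching the stated value. Sharpness follows by choosing $p$ equal to the relevant solution, for which the hypothesis holds with equality while $p\prec\mathcal{Q}$ fails the instant $\beta$ falls below the bound.

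The integration and the endpoint arithmetic are routine. The step demanding genuine care — and the main obstacle — is the justification that $q_\beta\prec\mathcal{Q}$ really is equivalent to the four endpoint inequalities: unlike the convex Ma--Minda functions, the region $\mathcal{Q}(\mathbb{D})=\{e^{e^z-1}:z\in\mathbb{D}\}$ need not be convex, so one must argue geometrically that a domain symmetric about the real axis whose real trace is the segment $(\mathcal{Q}(-1),\mathcal{Q}(1))$ contains the (equally real-symmetric) image of the solution exactly when the two real traces are nested. Securing that this endpoint criterion is both necessary and sufficient when $\mathcal{Q}$ is the dominant — precisely the fact already exploited for $\mathcal{Q}$ in Theorem \ref{first thm for bell}(b) — is what underpins the entire argument.
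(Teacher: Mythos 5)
Your proposal is correct and follows essentially the same route as the paper: the same explicit solutions $q_\beta$, $\breve q_\beta$, $\hat q_\beta$ of $\Psi_\beta$, $\Lambda_\beta$, $\Theta_\beta$ equal to $\phi_c$, the same application of Lemma \ref{lemma} with $v\equiv 1$ and $\psi(w)=\beta,\ \beta/w,\ \beta/w^2$, and the same endpoint-inequality comparison with $\mathcal{Q}(\pm 1)$ to extract the three sharp bounds. In fact you supply details the paper compresses into ``proceeding as in Theorems \ref{first thm for bell}--\ref{th2}'' — the explicit starlikeness check $zQ'(z)/Q(z)=2(1+z)/(2+z)$ (needed because $\phi_c$ is not convex), the nonvanishing condition $\beta>5/3$ for $\hat q_\beta$, and the sharpness mechanism — all of which are consistent with the paper's argument.
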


\begin{proof}
\begin{enumerate}[(a)]
\item Define the analytic function  $q_{\beta}:\overline{\mathbb{D}}\rightarrow \mathbb{C}$ by
\[q_{\beta}(z)=1+\frac{1}{\beta}\left(\frac{4z}{3}+\frac{z^{2}}{3}\right)\]
It is easy to see that the  function $q_{\beta}$ satisfies the differential equation
$\beta zq'(z) = \phi_{c}(z)-1$.
Proceeding as similar lines in Theorem \ref{first thm for bell}, the required subordination holds if and only if,
\begin{equation}\label{eq1}
e^{e^{-1}-1} < q_{\beta}(-1) < q_{\beta}(1) < e^{e-1}.
\end{equation}
Simplifying the condition \eqref{eq1}, we obtain the inequalities
\[\beta\geq \frac{1}{1-e^{(e^{-1}-1)}}=\beta_{1} \quad \text{and}\quad\beta\geq \frac{5 e}{3 \left(e^e-e\right)}=\beta_{2}.\] Thus, the required subordination holds if
$\beta \geq \max\{\beta_{1},\beta_{2}\}=\beta_{1}$.
	
\item Define the analytic function $\breve{q}_{\beta}(z)$ by,
\[\breve{q}_{\beta}(z)=\exp\left(\frac{1}{\beta}\left(\frac{4z}{3}+\frac{z^{2}}{3}\right)\right)\]
which is a solution of the equation
\[\frac{d\breve{q}_{\beta}'(z)}{dz}=\frac{2(2+z)}{3 \beta}\breve{q}_{\beta}(z).\]
Proceeding as similar lines in Theorem \ref{th1}, the subordination $p(z) \prec e^{e^{z}-1}$ holds if
$\beta \geq \max\{\breve{\beta}_{1},\breve{\beta}_{2}\}$,
where
\[\breve{\beta}_{1}=\frac{e}{e-1} \text{ and } \breve{\beta}_{2}=\frac{5}{3(e-1)}\]
are obtained from the inequalities
$\breve{q}_{\beta}(-1) > e^{e^{-1}-1}$ and $\breve{q}_{\beta}(1) < e^{e-1}$ respectively.
		
\item The differential equation
\[\frac{d\hat{q}_{\beta}'(z)}{dz}=\frac{2(2+z)}{3 \beta}\hat{q}_{\beta}^2(z)\]
has an analytic solution
\[\hat{q}_{\beta}(z)=\left(1-\frac{1}{\beta}\left(\frac{4z}{3}+\frac{z^{2}}{3}\right)\right)^{-1}\]
in $\mathbb{D}$.
Therefore, proceeding as in Theorem \ref{th2},
the required subordination $p(z) \prec e^{e^{z}-1}$ holds if $\beta \geq \max\{\hat{\beta}_{1},\hat{\beta}_{2}\}=\hat{\beta}_{2}$,
where
\[\hat{\beta}_{1}= \frac{e^{\frac{1}{e}-1}}{1-e^{\frac{1}{e}-1}}\text{ and }\hat{\beta}_{2}=\frac{5e^{e-1}}{3(e^{e-1}-1)}.\]
\end{enumerate}
\qedhere
\end{proof}
\begin{corollary}
Let $f \in \mathcal{A}$ be given by $f(z)=z+\sum\limits_{n=2}^{\infty}a_{n}z^{n}$. If one of the following subordinations holds
\begin{enumerate}[(a)]
	\item $1+ \beta \frac{zf'(z)}{f(z)} \mathfrak{M}(z) \prec \phi_{c}(z)$ for
	$\beta$ $\geq$ $\frac{1}{1-e^{(e^{-1}-1)}}$,
	\item $1+ \beta \mathfrak{M}(z) \prec \phi_{c}(z)$ for
	$\beta$ $\geq$ $\frac{e}{e-1}$,
	\item $1+ \beta  \left(\frac{zf'(z)}{f(z)}\right)^{-1}\mathfrak{M}(z) \prec \phi_{c}(z)$ for
	$\beta$ $\geq$ $\frac{5e^{e-1}}{(3(e^{e-1}-1))}$,
\end{enumerate}
then $f \in \mathcal{S}^{*}_{B}$, where $\mathfrak{M}(z)$ is defined in Corollary \ref{Cor1}.
\end{corollary}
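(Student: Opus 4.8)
The plan is to apply Theorem \ref{thm1} directly, with the single choice $p(z) = zf'(z)/f(z)$. Under this choice, membership $f \in \mathcal{S}^{*}_{B} = \mathcal{S}^{*}(\mathcal{Q})$ is by definition the statement $zf'(z)/f(z) \prec \mathcal{Q}(z)$, which is exactly the conclusion $p(z) \prec \mathcal{Q}(z)$ delivered by that theorem. First I would record that for $f \in \mathcal{A}$ the function $p$ is analytic in $\mathbb{D}$ with $p(0) = 1$: indeed $f(z)/z = 1 + a_{2}z + \cdots$ is analytic and nonvanishing near the origin (its nonvanishing on all of $\mathbb{D}$ being the standing assumption already implicit in the hypotheses), and $zf'(z)/f(z) \to 1$ as $z \to 0$.

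The crux is one logarithmic-derivative identity. Writing $\log p(z) = \log z + \log f'(z) - \log f(z)$ and differentiating yields
\[
\frac{zp'(z)}{p(z)} = 1 + \frac{zf''(z)}{f'(z)} - \frac{zf'(z)}{f(z)} = \mathfrak{M}(z),
\]
with $\mathfrak{M}$ as in Corollary \ref{Cor1}. From this lone identity the three hypotheses transcribe mechanically into the three operators of Theorem \ref{thm1}. Using $zp'(z) = p(z)\,\mathfrak{M}(z)$ together with $p(z) = zf'(z)/f(z)$, I obtain
\[
\Psi_{\beta}(z,p(z)) = 1 + \beta z p'(z) = 1 + \beta \frac{zf'(z)}{f(z)} \mathfrak{M}(z), \qquad \Lambda_{\beta}(z,p(z)) = 1 + \beta \frac{zp'(z)}{p(z)} = 1 + \beta \mathfrak{M}(z),
\]
and
\[
\Theta_{\beta}(z,p(z)) = 1 + \beta \frac{zp'(z)}{p^{2}(z)} = 1 + \beta \left(\frac{zf'(z)}{f(z)}\right)^{-1} \mathfrak{M}(z).
\]
Thus the subordinations (a), (b), (c) in the corollary are precisely $\Psi_{\beta}(z,p(z)) \prec \phi_{c}(z)$, $\Lambda_{\beta}(z,p(z)) \prec \phi_{c}(z)$, and $\Theta_{\beta}(z,p(z)) \prec \phi_{c}(z)$.

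It then remains only to invoke Theorem \ref{thm1} case by case: the bound $\beta \geq 1/(1 - e^{e^{-1}-1})$ in (a), the bound $\beta \geq e/(e-1)$ in (b), and the bound $\beta \geq 5e^{e-1}/(3(e^{e-1}-1))$ in (c) are exactly the thresholds in parts (a), (b), (c) of that theorem. Each therefore forces $p(z) \prec \mathcal{Q}(z)$, that is, $zf'(z)/f(z) \prec e^{e^{z}-1}$, so $f \in \mathcal{S}^{*}_{B}$. I expect no genuine analytic obstacle here, since the result is a transcription of Theorem \ref{thm1} through the substitution $p = zf'/f$; the only points demanding care are the identity $zp'(z)/p(z) = \mathfrak{M}(z)$ and the bookkeeping that pairs each subordination with the correct operator and hence the correct $\beta$-threshold.
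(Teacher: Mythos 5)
Your proof is correct and is exactly the argument the paper intends: the corollary is stated as an immediate application of Theorem \ref{thm1} via the substitution $p(z)=zf'(z)/f(z)$, with the identity $zp'(z)/p(z)=\mathfrak{M}(z)$ converting each operator $\Psi_{\beta}$, $\Lambda_{\beta}$, $\Theta_{\beta}$ into the corresponding subordination hypothesis. The bookkeeping pairing each case with its $\beta$-threshold matches the paper as well.
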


The next results provide best possible bound on $\beta$ so that the subordination $1+\beta z p'(z)/p^{j}(z) \prec \phi_{c}(z),\,\phi_{0}(z)  (j=0,1,2)$ implies the subordination $p(z) \prec \phi_{SG}(z)$. Proofs of the following results are omitted as similar to the previous Theorem \ref{thm1}.
		
\begin{theorem}\label{thm SG phi_{0}}
Let $p$ be an analytic function in $\mathbb{D}$ with $p(0)=1$.
Then the following subordinations hold for $p(z) \prec \phi_{SG}(z):={2}/{(1+e^{-z})}$.

\begin{enumerate}[(a)]
\item $\Psi_{\beta}(z,p(z)) \prec \phi_{0}(z)$ if $\beta$ $\geq$ $\frac{(e+1)(1-\sqrt{2}-2\log(2-\sqrt{2}))}{e-1} \approx 1.418226$.
\item $\Lambda_{\beta}(z,p(z)) \prec \phi_{0}(z)$ if $\beta$ $\geq$ $\frac{1-\sqrt{2}-2\log(2-\sqrt{2})}{1+\log2-\log(1+e)} \approx 1.725221$.
\item $\Theta_{\beta}(z,p(z)) \prec \phi_{0}(z)$ if $\beta$ $\geq$ $\frac{2e(1-\sqrt{2}-2\log(2-\sqrt{2}))}{e-1} \approx 2.073612$.
\end{enumerate}
The bounds on $\beta$ in each case are sharp.
\end{theorem}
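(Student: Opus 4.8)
The plan is to run the same three–step machine used in Theorems \ref{thm1}, \ref{first thm for bell}, \ref{th1} and \ref{th2}, now with $\phi_{0}$ as the driving Carath\'{e}odory function and $\phi_{SG}$ as the target. For part (a) I would take
\[q_{\beta}(z)=1+\frac{1}{\beta}\int_{0}^{z}\frac{\phi_{0}(t)-1}{t}\,dt,\]
for part (b) the exponential analogue $\breve{q}_{\beta}(z)=\exp\bigl(\tfrac{1}{\beta}\int_{0}^{z}(\phi_{0}(t)-1)/t\,dt\bigr)$, and for part (c) the reciprocal analogue $\hat{q}_{\beta}(z)=\bigl(1-\tfrac{1}{\beta}\int_{0}^{z}(\phi_{0}(t)-1)/t\,dt\bigr)^{-1}$, which solve $1+\beta zq_{\beta}'=\phi_{0}$, $1+\beta z\breve{q}_{\beta}'/\breve{q}_{\beta}=\phi_{0}$ and $1+\beta z\hat{q}_{\beta}'/\hat{q}_{\beta}^{2}=\phi_{0}$ respectively. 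Choosing $v(w)=1$ with $\psi(w)=\beta,\ \beta/w,\ \beta/w^{2}$ in turn, all three cases produce the same companion function $Q(z)=\phi_{0}(z)-1=z(k+z)/\bigl(k(k-z)\bigr)$ with $k=1+\sqrt{2}$.

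To invoke Lemma \ref{lemma} I must check that $Q$ is starlike univalent and that $h=1+Q$ obeys $\operatorname{Re}\bigl(zh'(z)/Q(z)\bigr)>0$; since $h'=Q'$ the latter reduces to starlikeness of $Q$. A direct computation gives
\[\frac{zQ'(z)}{Q(z)}=1+\frac{2kz}{k^{2}-z^{2}},\]
and because $k^{2}-1=2+2\sqrt{2}=2k$ for $k=1+\sqrt{2}$, one has $\bigl|2kz/(k^{2}-z^{2})\bigr|\le 2k/(k^{2}-1)=1$ on $\overline{\mathbb{D}}$ with equality only on the boundary (e.g.\ $\operatorname{Re}$ vanishes at $z=-1$). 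Hence $\operatorname{Re}\bigl(zQ'/Q\bigr)>0$ in $\mathbb{D}$ and $Q'(0)=1/k\neq0$, so Lemma \ref{lemma} yields $p\prec q_{\beta}$ (respectively $\breve{q}_{\beta}$, $\hat{q}_{\beta}$).

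The decisive step is to force $q_{\beta}\prec\phi_{SG}$ via the endpoint test $\phi_{SG}(-1)<q_{\beta}(-1)<q_{\beta}(1)<\phi_{SG}(1)$ (and its analogues), where $\phi_{SG}(-1)=2/(e+1)$ and $\phi_{SG}(1)=2e/(e+1)$. The key calculation is the partial-fraction evaluation
\[\int_{0}^{z}\frac{\phi_{0}(t)-1}{t}\,dt=2\log\frac{k}{k-z}-\frac{z}{k},\]
which at $z=1$, using $k/(k-1)=1/(2-\sqrt{2})$ and $1/k=\sqrt{2}-1$, equals $1-\sqrt{2}-2\log(2-\sqrt{2})$, and at $z=-1$, using $k/(k+1)=1/\sqrt{2}$, equals $\sqrt{2}-1-\log2$. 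Substituting these into the three endpoint inequalities and solving the resulting linear, exponential and reciprocal conditions for $\beta$ produces in each part a pair $\beta_{1}$ (from $z=-1$) and $\beta_{2}$ (from $z=1$); the displayed bounds are exactly $\beta_{2}$, with $\log\phi_{SG}(1)=1+\log2-\log(1+e)$ entering part (b) and the factors $(e+1)/(e-1)$, $2e/(e-1)$ entering parts (a) and (c). Sharpness follows as in the earlier theorems by taking $p$ equal to the corresponding extremal solution at $\beta=\beta_{2}$, for which $q_{\beta}(1)=\phi_{SG}(1)$.

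The main obstacle I anticipate is not any single computation but the bookkeeping needed to confirm $\beta_{2}\ge\beta_{1}$ in all three parts, i.e.\ that the right endpoint $z=1$ is the active one: both $\phi_{0}$ and $\phi_{SG}$ are asymmetric about the origin and $(\phi_{0}(t)-1)/t$ is not even, so the two endpoints give genuinely different constraints and one must verify (consistent with the stated values $1.418226$, $1.725221$, $2.073612$) that $\beta_{2}$ dominates. For part (c) one must additionally note that $\beta_{2}>1-\sqrt{2}-2\log(2-\sqrt{2})$, so that $1-\tfrac{1}{\beta}\int_{0}^{1}(\phi_{0}(t)-1)/t\,dt$ remains positive and $\hat{q}_{\beta}(1)$ is a legitimate positive value.
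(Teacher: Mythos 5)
Your proposal is correct and follows exactly the route the paper intends: the paper omits the proof of this theorem, stating it is "similar to Theorem \ref{thm1}," and your argument is precisely that machine — the three solutions $q_{\beta}$, $\breve{q}_{\beta}$, $\hat{q}_{\beta}$ of the associated ODEs, the common starlike function $Q(z)=\phi_{0}(z)-1$ verified via $zQ'(z)/Q(z)=1+2kz/(k^{2}-z^{2})$ with $k^{2}-1=2k$, Lemma \ref{lemma}, and the endpoint test against $\phi_{SG}(\mp1)=2/(e+1),\,2e/(e+1)$. Your evaluations of the integral at $z=\pm1$ and the resulting $\beta_{2}$ values reproduce the stated bounds (with $\beta_{2}\ge\beta_{1}$ in all three parts), so nothing is missing.
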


\begin{theorem}
Let \textit{p} be an analytic function in $\mathbb{D}$ which satisfies $p(0)=1$.
Then each of the following subordination is sufficient for $p(z) \prec \phi_{SG}(z)$.

\begin{enumerate}[(a)]
\item $\Psi_{\beta}(z,p(z)) \prec\phi_{c}(z)$ if
	$\beta$ $\geq$ $\frac{5(e+1)}{3(e-1)} \approx 3.60659$.
\item $\Lambda_{\beta}(z,p(z)) \prec \phi_{c}(z)$ if
	$\beta$ $\geq$ $\frac{5}{3(1+\log2-\log(1+e))} \approx 4.387286$.
\item $\Theta_{\beta}(z,p(z)) \prec \phi_{c}(z)$ if
	$\beta$ $\geq$ $\frac{10e}{3(e-1)} \approx 5.27326$.
\end{enumerate}
The bounds on $\beta$ in each case are sharp.
\end{theorem}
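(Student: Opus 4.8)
The plan is to mirror the three-part argument of Theorem \ref{thm1}, merely replacing the target function $\mathcal{Q}$ by $\phi_{SG}$ in the final comparison. In each part I first solve the first order differential equation whose right-hand side is $\phi_c(z)=1+4z/3+2z^2/3$ to produce the best dominant, and then reduce the desired conclusion $p\prec\phi_{SG}$ to a two-point condition at $z=\pm1$.

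For part (a) I take $v(w)=1$ and $\psi(w)=\beta$ as in Theorem \ref{first thm for bell}; the function $q_\beta(z)=1+\frac1\beta\left(\frac{4z}{3}+\frac{z^2}{3}\right)$ solves $1+\beta zq_\beta'(z)=\phi_c(z)$, and since $Q(z)=\phi_c(z)-1=\frac{2z}{3}(2+z)$ is starlike and $h(z)=1+Q(z)$ satisfies condition (ii) of Lemma \ref{lemma}, that lemma yields $\Psi_\beta(z,p(z))\prec\phi_c(z)\Rightarrow p\prec q_\beta$. For parts (b) and (c) I instead take $\psi(w)=\beta/w$ and $\psi(w)=\beta/w^2$, producing $\breve q_\beta(z)=\exp\left(\frac1\beta\left(\frac{4z}{3}+\frac{z^2}{3}\right)\right)$ and $\hat q_\beta(z)=\left(1-\frac1\beta\left(\frac{4z}{3}+\frac{z^2}{3}\right)\right)^{-1}$ --- precisely the dominants already used in Theorem \ref{thm1} --- so that the same verification gives $\Lambda_\beta(z,p(z))\prec\phi_c(z)\Rightarrow p\prec\breve q_\beta$ and $\Theta_\beta(z,p(z))\prec\phi_c(z)\Rightarrow p\prec\hat q_\beta$.

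The heart of each part is the transitivity step. By the reduction established in Theorem \ref{first thm for bell}, the subordination of the dominant into $\phi_{SG}$ is equivalent to $\phi_{SG}(-1)<q_\beta(-1)<q_\beta(1)<\phi_{SG}(1)$ with $\phi_{SG}(-1)=2/(e+1)$ and $\phi_{SG}(1)=2e/(e+1)$. Substituting the endpoint values $q_\beta(-1)=1-1/\beta,\ q_\beta(1)=1+5/(3\beta)$ in (a); $\breve q_\beta(-1)=e^{-1/\beta},\ \breve q_\beta(1)=e^{5/(3\beta)}$ in (b); and $\hat q_\beta(-1)=\beta/(\beta+1),\ \hat q_\beta(1)=3\beta/(3\beta-5)$ in (c), the left inequality produces a lower bound $\beta_1$ and the right inequality a lower bound $\beta_2$. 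A short computation gives $\beta_1=(e+1)/(e-1),\ 1/(\log(e+1)-\log2),\ 2/(e-1)$ and $\beta_2=5(e+1)/(3(e-1)),\ 5/(3(1+\log2-\log(e+1))),\ 10e/(3(e-1))$ in the three parts; in every case $\beta_2>\beta_1$, so the stated threshold is exactly $\max\{\beta_1,\beta_2\}=\beta_2$, and by transitivity $p\prec\phi_{SG}$.

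I expect the only genuine obstacle to be justifying that the four-term chain at $z=\pm1$ is both necessary and sufficient for $q_\beta\prec\phi_{SG}$: one must check that the horizontal extent of the image $q_\beta(\mathbb{D})$ is attained at the real endpoints and that $\phi_{SG}(\mathbb{D})$ is a convex region symmetric about the real axis, so that containment collapses to the endpoint comparison. This is the same mechanism invoked in Theorem \ref{first thm for bell}, and it simultaneously delivers sharpness: at $\beta=\beta_2$ the binding inequality $q_\beta(1)=\phi_{SG}(1)$ holds with equality, so for any smaller $\beta$ the extremal choice $p=q_\beta$ violates the conclusion, showing the bound cannot be lowered.
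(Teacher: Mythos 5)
Your proposal is correct and follows exactly the route the paper intends: the paper omits this proof, stating it proceeds as in Theorem \ref{thm1}, and your argument reproduces that scheme --- the dominants $q_\beta$, $\breve q_\beta$, $\hat q_\beta$ solving the $\phi_c$-driven differential equations via Lemma \ref{lemma}, followed by the endpoint comparison $\phi_{SG}(-1)<q_\beta(-1)<q_\beta(1)<\phi_{SG}(1)$ with $\phi_{SG}(\pm1)=2/(e+1),\,2e/(e+1)$. Your computed thresholds $\beta_2=5(e+1)/(3(e-1))$, $5/(3(1+\log2-\log(1+e)))$, $10e/(3(e-1))$ all check out and dominate the corresponding $\beta_1$ in each case, so nothing is missing.
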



\begin{thebibliography}{00}
\bibitem{Ahuja18} O. P. Ahuja, S. Kumar\ and\ V. Ravichandran, Applications of first order differential subordination for functions with positive real part, Stud. Univ. Babe\c{s}-Bolyai Math. {\bf 63} (2018), no.~3, 303--311.


\bibitem{MR2336133} R. M. Ali, V. Ravichandran\ and\ N. Seenivasagan, Sufficient conditions for Janowski starlikeness, Int. J. Math. Math. Sci. {\bf 2007}, Art. ID 62925, 7 pp.

\bibitem{Bohra19} N. Bohra, S. Kumar\ and\ V. Ravichandran, Some special differential subordinations, Hacet. J. Math. Stat. {\bf 48} (2019), no.~4, 1017--1034.


\bibitem{MR3913990} N. E. Cho,  V. Kumar,  S. S. Kumar\ and\ V.Ravichandran, Radius problems for starlike functions associated with the sine function, Bull. Iranian Math. Soc. {\bf 45} (2019), no.~1, 213--232.

\bibitem{bell s kumar} N. E. Cho, S. Kumar, V. Kumar, V. Ravichandran, and H. M. Srivastava, Starlike functions related to the Bell numbers, Symmetry 11 (2019), Article ID 219, 1-17.

\bibitem{Ebadian20} A. Ebadian, N. H. Mohammed, E. A. Adegani , and T. Bulboacă, New results for some generalizations of starlike and convex functions, J. Funct. Spaces {\bf 2020}, Art. ID 7428648, 12 pp.


\bibitem{Gandhi18} S. Gandhi, S. Kumar\ and\ V. Ravichandran, First order differential subordinations for Carath\'{e}odory functions, Kyungpook Math. J. {\bf 58} (2018), no.~2, 257--270.


\bibitem{MR4044913} P. Goel\ and\ S. S. Kumar, Certain class of starlike functions associated with modified sigmoid function, Bull. Malays. Math. Sci. Soc. {\bf 43} (2020), no.~1, 957--991.

\bibitem{MR0267103} W. Janowski, Extremal problems for a family of functions with positive real part and for some related families, Ann. Polon. Math. {\bf 23} (1970/71), 159--177.

\bibitem{Kanas06} S. Kanas, Differential subordination related to conic sections, J. Math. Anal. Appl. {\bf 317} (2006), no.~2, 650--658.

\bibitem{MR3702968} R. Kargar, A. Ebadian\ and\ J. Sok\'{o}\l, Radius problems for some subclasses of analytic functions, Complex Anal. Oper. Theory {\bf 11} (2017), no.~7, 1639--1649.

\bibitem{MR3496681} S. Kumar\ and\ V. Ravichandran, A subclass of starlike functions associated with a rational function, Southeast Asian Bull. Math. {\bf 40} (2016), no.~2, 199--212.

\bibitem{MR3800966} S. Kumar\ and\ V. Ravichandran, Subordinations for functions with positive real part, Complex Anal. Oper. Theory {\bf 12} (2018), no.~5, 1179--1191.

\bibitem{MR1343506} W. C. Ma\ and\ D. Minda, A unified treatment of some special classes of univalent functions, in {\it Proceedings of the Conference on Complex Analysis (Tianjin, 1992)}, 157--169, Conf. Proc. Lecture Notes Anal., I, Int. Press, Cambridge, MA.

\bibitem{MR3394060} R. Mendiratta, S. Nagpal\ and\ V. Ravichandran, On a subclass of strongly starlike functions associated with exponential function, Bull. Malays. Math. Sci. Soc. {\bf 38} (2015), no.~1, 365--386.


\bibitem{MR0783572} S. S. Miller\ and\ P. T. Mocanu, On some classes of first-order differential subordinations, Michigan Math. J. {\bf 32} (1985), no.~2, 185--195.

\bibitem{MR1760285} S. S. Miller\ and\ P. T. Mocanu, {\it Differential subordinations}, Monographs and Textbooks in Pure and Applied Mathematics, 225, Marcel Dekker, Inc., New York, 2000.

\bibitem{MR0975653} M. Nunokawa, M. Obradovi\'{c}\ and\ S. Owa, One criterion for univalency, Proc. Amer. Math. Soc. {\bf 106} (1989), no.~4, 1035--1037.
\bibitem{Nuno99}  M. Nunokawa, S. Owa, S. Saitoh \ and \ O. P. Ahuja, On the quotient of the analytic representations of convex and starlike functions, Research Institute of Mathematical Analysis, No. 1112 (1999), 63--69.

\bibitem{MR3469339} R. K. Raina\ and\ J. Sok\'{o}\l, On coefficient estimates for a certain class of starlike functions, Hacet. J. Math. Stat. {\bf 44} (2015), no.~6, 1427--1433.

\bibitem{MR3369109}K. Sharma, N. K. Jain\ and\ V. Ravichandran, Starlike functions associated with a cardioid, Afr. Mat. {\bf 27} (2016), no.~5-6, 923--939.

\bibitem{Silverman99} H. Silverman, Convex and starlike criteria, Int. J. Math. Math. Sci. {\bf 22} (1999), no.~1, 75--79.

\bibitem{limc} Y. Yunus, S.A. Halim,  A.B. Akbarally,  Subclass of starlike functions associated with a limacon. AIP Conf. Proc. 1974(1), 030023 (2018)
	
	
\end{thebibliography}
\end{document}